
\documentclass[psamsfonts]{amsart}

\usepackage{multicol}
\usepackage{comment}
\usepackage{tabu}
\usepackage{hhline}
\usepackage{enumerate}
\usepackage{float}
\usepackage{amsmath}
\usepackage{amsfonts}
\usepackage{amsthm}
\usepackage{amssymb}
\usepackage[pdftex]{graphicx}

\renewcommand{\mod}{\textrm{~mod~}}

\DeclareSymbolFont{extraup}{U}{zavm}{m}{n}
\DeclareMathSymbol{\varheart}{\mathalpha}{extraup}{86}
\DeclareMathSymbol{\vardiamond}{\mathalpha}{extraup}{87}

\theoremstyle{plain}
\newtheorem{thm}{Theorem}[section]
\newtheorem{lem}[thm]{Lemma}

\theoremstyle{definition}
\newtheorem{defn}{Definition}

\title{Triangulating Almost-Complete Graphs}
\keywords{graph decompositions, Latin squares, trades, tridivisible graphs}
\author{Kim Nguyen Pham, Landon Settle, Kayla Wright, Padraic Bartlett}

\begin{document}

\begin{abstract}
A triangle decomposition of a graph $G$ is a partition of the edges of $G$ into triangles.  Two necessary conditions for $G$ to admit such a decomposition are that $|E(G)|$ is a multiple of three and that the degree of any vertex in $G$ is even; we call such graphs tridivisible.  

Kirkman's work on Steiner triple systems established that for $G \simeq K_n$, $G$ admits a triangle decomposition precisely when $G$ is tridivisible.  In 1970, Nash-Williams conjectured that tridivisiblity is also sufficient for ``almost-complete'' graphs, which for this talk's purposes we interpret as any graph $G$ on $n$ vertices with $\delta(G) \geq (1 -\epsilon)n, E(G) \geq (1 - \xi)\binom{n}{2}$ for some appropriately small constants $\epsilon, \xi$.   Nash-Williams conjectured that $\epsilon = \xi =1/4$ would suffice; in 1991, Gustavsson demonstrated in his dissertation that $\epsilon  = \xi < 10^{-24}$ suffices for all $n \equiv 3, 9 \mod 18$, and in 2015 Keevash's work on the existence conjecture for combinatorial designs established that some value of $\epsilon$ existed for any $n$.

In this paper, we prove that for any $\epsilon < \frac{1}{432}$, there is a constant $\xi$ such that any $G$ with $\delta(G) \geq (1 - \epsilon)n$ and $|E(G)| \geq (1 - \xi)\binom{n}{2}$ admits such a decomposition, and offer an algorithm that explicitly constructs such a triangulation.  Moreover, we note that our algorithm runs in polynomial time on such graphs.  (This last observation contrasts with Holyer's result that finding triangle decompositions in general is a NP-complete problem.)

\end{abstract}

\maketitle

\section{Introduction}

Given any graph $G$, we say that $G$ admits a \textbf{triangle decomposition} if we can partition the edges of $G$ into disjoint copies of $K_3$.  In order for such a decomposition to exist, there are two ``obviously necessary'' conditions that $G$ needs to satisfy: the degree of any vertex must be even, as each triangle uses two edges to each of its vertices, and the total number of edges in $G$ must be a multiple of 3.  Taking after Keevash \cite{keevash2015counting}, we say that a graph $G$ is \textbf{tridivisible} if it meets these two conditions.

In general, tridivisibility is not sufficient to ensure that a graph has a triangle decomposition; consider $C_6$, for example.  However, for the complete graphs $K_n$, a triangle decomposition is equivalent to a Steiner triple system\footnote{A \textbf{Steiner triple system} $S(t,k,n)$ is any set $A$ of size $n$ along with a collection $B$ of subsets of $A$, such that the following two properties hold:
\begin{itemize}
\item Every element of $B$ has size $k$.
\item Every subset of $A$ of size $t$ is a subset of exactly one element of $B$.
\end{itemize}} $S(2,3,n)$; that is, a triangle decomposition of $K_n$ is any collection of blocks of $V(K_n)$ all of size three, such that any subset of size two of $V(K_n)$ shows up in exactly one block.  Kirkman \cite{Kirkman_1847}, \cite{lindner2008design} proved that tridivisibility is indeed a sufficient condition to ensure that a $S(2,3,n)$ exists; that is, $K_n$ admits a triangle decomposition if and only if $n$ is congruent to 1 or 3 mod 6.

In 1970, Nash-Williams \cite{Nash_Williams_1970} asked the following natural question: for what families of graphs is tridivisibility a sufficient condition to ensure that a triangle decomposition exists?  In particular, he asked if there was a constant $\epsilon > 0$ such that any tridivisible graph $G$ on $n$ vertices with $\delta(G) \geq (1-\epsilon)n$ must admit a triangle decomposition, and conjectured that $\epsilon = \frac{1}{4}$ would suffice.

If any value of $\epsilon$ does exist, then it is at most $1/4$. Following Gustavsson \cite{Gustavsson_1991}, look at the strong graph product $G = C_4 \boxtimes K_n$, i.e. the graph formed by taking four copies of $K_n$, placing them on the corners of a square, and connecting every pair of vertices from adjacent corners.
\begin{figure}[H]
\includegraphics[width=2in]{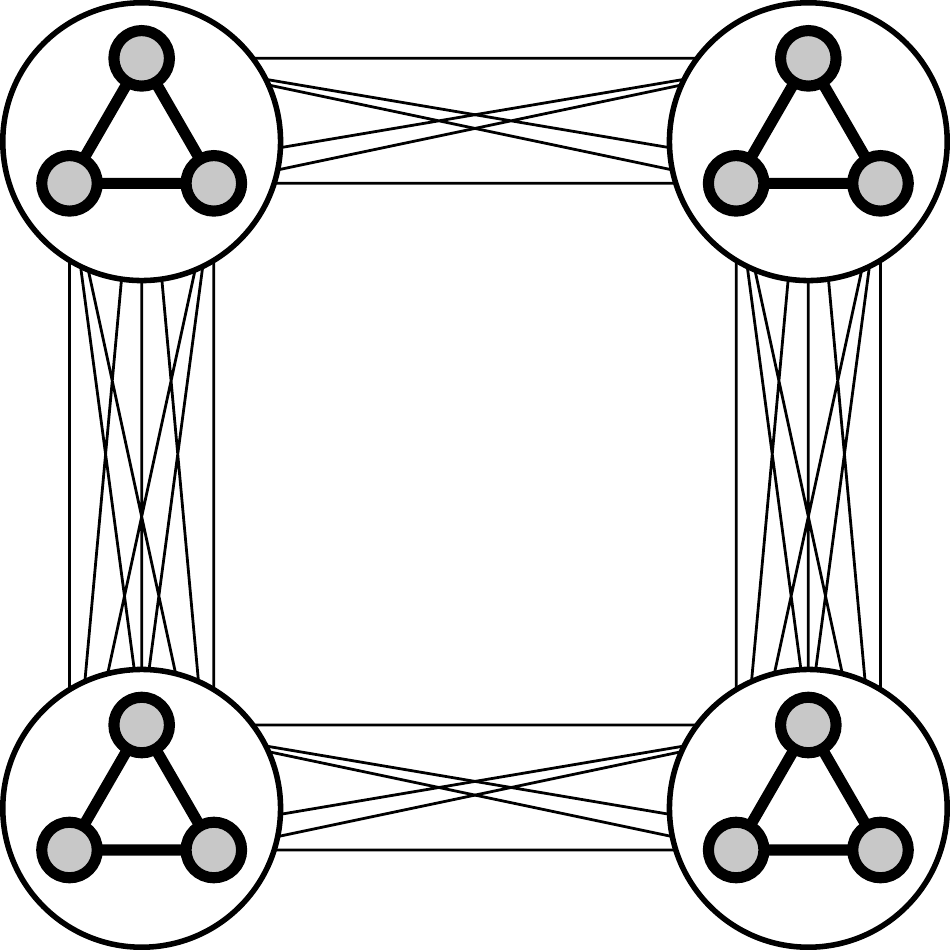}
\caption{A graph that does not admit a triangle decomposition.}\label{notridecomp}
\end{figure}
This is a graph on $4n$ vertices with $\delta(G) = 3n-1$.  Therefore, for any $\epsilon > 1/4$, there are instances of this graph for which every vertex has degree at least $(1-\epsilon)n$.  To see why no triangle decomposition is possible, partition the edges of $G$ into two kinds: the ``corner'' edges within each $K_n$, and the ``side'' edges connecting two different $K_n$'s.  Notice that any triangle that uses a side edge must use exactly two side edges and one corner edge in the construction above.  However, while there are $4n^2$ side edges, there are only  $4\binom{n}{2} = 2n^2 - 2n < 2n^2$ corner edges; so no such decomposition is possible.   

When Nash-Williams made his conjecture, it was not known if any value of $\epsilon$ existed at all.  Gustavsson \cite{Gustavsson_1991}, in a chapter of his 1991 doctoral thesis, proved that such a value of $\epsilon$ does exist for some values of $n$; specifically, he shows that any tridivisible graph $G$ on $n$ vertices with $\delta(G) \geq (1 - 10^{-24}) n$ admits a triangle decomposition, provided that $n \equiv 3, 9 $ mod $18$.  Garaschuk's 2014 dissertation considers a related problem: rational triangle decompositions of graphs, where a \textbf{rational triangle decomposition} is any way to write $G$ as the union of triangles, each triangle weighted by a nonnegative rational number, so that the sum over any edge of $G$ of all of the triangles containing that edge is 1.  Garaschuk proves in her thesis that any tridivisible graph $G$ on $n$ vertices with $\delta(G) \geq \frac{22}{23}n$ admits a rational triangle decomposition.

In 2015, Keevash \cite{keevash2014existence} proved a broader result: the existence conjecture for combinatorial designs.  He accomplishes this by showing that any hypergraph with an appropriate pseudorandom condition can be decomposed into cliques, using a novel technique dubbed Randomized Algebraic Construction.  As a special case, Keevash adapts \cite{keevash2015counting} these methods to the study of tridivisible graphs, and has proven the following:
\begin{thm}
Let $G$ be a graph on $n$ vertices.  Define its \textbf{density}, $d(G)$, as the ratio $|E(G)|/\binom{n}{2}$. We say that $G$ is \textbf{$c$-typical}, for some constant $c$, if every $v \in V(G)$ has $\deg(v)$ between $(1-c)d(G)n$ and $(1+c)d(G)n]$, and moreover any two vertices in $V(G)$ have somewhere between $(1-c)d(G)^2n$ and $(1+c)d(G)^2n$ neighbors in common.

There are constants $c_0, n_0$ such that any $c$-typical tridivisible graph on $n$ vertices admits a triangle decomposition, provided that $d(G) > n^{-10^{-7}}$, $n > n_0$, and $c < c_0 d(G)^{10^6}$.

\end{thm}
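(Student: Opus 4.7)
The plan is to adopt an \emph{absorbing} approach, a powerful method in extremal and probabilistic combinatorics that reduces the problem to decomposing a dense ``bulk'' and absorbing a sparse ``leftover.''  Specifically, I would first set aside a small random subgraph $H \subseteq G$, called an \emph{absorber}, which is engineered so that for every sufficiently small tridivisible subgraph $L \subseteq G \setminus H$, the union $H \cup L$ admits a triangle decomposition.  With such an absorber in hand, I would apply a random greedy triangle-removal process (a R\"odl-style ``nibble'') to $G \setminus H$, producing a nearly complete partial triangle decomposition whose uncovered edges form exactly such a small remnant $L$; the absorber then finishes the job.

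The heart of the argument, and the principal obstacle, is the construction of $H$.  This is where I would use Keevash's \emph{Randomized Algebraic Construction}: label the vertices of $G$ by random elements of an algebraic structure (for concreteness, vectors over a finite field of carefully chosen size) and take as the absorbing triangles a family of ``templates'' that are determined by this labelling, e.g.\ triples whose labels sum to a fixed target.  The algebraic structure gives $H$ strong internal flexibility, in that many different subgraphs of $H$ can be individually triangulated, and these decompositions can be swapped against one another to absorb a prescribed leftover.  The $c$-typicality hypothesis is used precisely to guarantee that the random labelling yields the pseudorandom distribution of templates needed, and that every potential leftover edge has enough compatible absorbing triangles available; this is where the quantitative dependencies $c < c_0 d(G)^{10^6}$ and $d(G) > n^{-10^{-7}}$ are paid for.

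For the nibble stage, I would analyze the auxiliary ``triangle hypergraph'' $\mathcal{T}(G \setminus H)$ whose vertices are edges of $G \setminus H$ and whose hyperedges are triangles, and apply a Pippenger--Spencer style semi-random matching argument.  Typicality of $G$ controls both the codegree and the regularity of $\mathcal{T}$, so the nibble leaves a sparse quasi-random leftover of size $o(|E(G)|)$.  One technical subtlety is that the leftover must itself be tridivisible for the absorber to act; this is arranged by running the nibble on $G \setminus H$, whose divisibility parameters agree with those of $G$ modulo the deterministic contribution of $H$, and by choosing $H$ so that these residues cancel.

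The two pieces are then glued by verifying that (i) $|E(H)|$ is a small constant fraction of $|E(G)|$, (ii) the leftover from nibbling $G \setminus H$ has with high probability fewer edges than the absorbing capacity of $H$, and (iii) the leftover falls within the family of configurations that $H$ was constructed to absorb.  I expect the construction and analysis of the algebraic absorber to be by far the hardest step: one must simultaneously prove that randomized algebraic templates are rich enough to absorb \emph{every} tridivisible leftover of bounded size, yet sparse enough that the main nibble argument can still cover the rest of $G$.  Calibrating the exponents in the hypothesis is essentially the cost of balancing these two competing demands.
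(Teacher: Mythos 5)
This theorem is not proved in the paper at all: it is quoted as background, attributed to Keevash, and the paper's own contribution uses an entirely different (explicit, Gustavsson-style) method. Your sketch correctly identifies the strategy of Keevash's actual proof --- a Randomized Algebraic Construction absorber combined with a R\"odl-nibble covering of the bulk, with $c$-typicality feeding the pseudorandomness requirements --- so as an outline it matches the source the paper cites. However, be clear that what you have written is a plan rather than a proof: the entire mathematical content lives in the step you yourself flag as hardest, namely constructing the algebraic template and proving it can absorb \emph{every} bounded tridivisible leftover, and nothing in your proposal supplies that argument or derives the quantitative thresholds $d(G) > n^{-10^{-7}}$ and $c < c_0\, d(G)^{10^6}$ from it.
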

The constants here are worse than those of Gustavsson, as the techniques used in Keevash's paper rely on randomization and applications of Szemeredi regularity lemma-style results.  Rather, the power in Keevash's work is in its tremendous range.

More recently, Barber, K\"uhn, Lo and Osthus dramatically improved on this result in \cite{barber2016edge}, in which they develop a polynomial-time randomized algorithm that can find a triangulation of any graph on $n$ vertices with minimum degree at least $9n/10 + o(n)$ with high probability.  In fact, their paper derives similarly nice conditions under which any graph admits a $F$-decomposition, for any $F$, and is used by Bowditch and Dukes to show that any partial Latin square containing no more than $\frac{1}{25}$ entries in any row, column or symbol can be completed.

In this paper, we consider the subtask of finding such triangulations explicitly; i.e. via algorithms that are guaranteed to complete.

\begin{thm}
Take any $\epsilon < \frac{1}{432}$.  Then there is some constant $\xi > 0$ such that any tridivisible graph $G$ with $\delta(G) \geq (1- \epsilon)n$ and $|E(G)| \geq (1 -\xi)\binom{n}{2}$ has a triangle decomposition, and can find such a completion in $O(n^4)$ steps.
\end{thm}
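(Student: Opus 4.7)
The plan is to combine Kirkman's theorem on $K_n$ with Latin-square-based decompositions of complete tripartite graphs, using local trades to reconcile the two layers and to absorb the $\xi \binom{n}{2}$ missing edges of $G$. First I would partition $V(G)$ into three roughly equal parts $V_1, V_2, V_3$, each chosen so that the induced subgraph on $V_i$ is tridivisible. Because $\epsilon$ and $\xi$ are small, only a bounded number of vertices need be moved between parts to satisfy the two congruence conditions on each induced subgraph; a sorting subroutine locates a valid partition in $O(n^2)$ time, and the degree hypothesis guarantees that these shuffles do not create vertices with too many missing edges inside any part.

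Next I would triangulate the cross-part edges. The complete tripartite graph $K_{m,m,m}$ has triangle decompositions in bijection with $m \times m$ Latin squares; since $G$ is missing only $\xi \binom{n}{2}$ edges overall, the corresponding partial Latin rectangle has few forbidden cells, and can be completed by iteratively locating intercalate-like trades that replace forbidden assignments with legal ones. In parallel, inside each $V_i$ I would apply Kirkman's construction to a ``repaired'' tridivisible clique on $V_i$, using further trades with the tripartite triangles to absorb the $O(\xi n^2)$ missing internal edges.

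The principal obstacle lies in the interaction between the two decomposition layers: a trade that legalizes one cell of the Latin square can invalidate another, or force an incompatible edge within a part, and similarly a fix-up inside some $V_i$ can disturb the cross-part structure. To control this I would maintain the invariant that the number of ``bad'' features --- missing edges, cross-layer conflicts, and unassigned Latin cells --- strictly decreases at every iteration, and verify that the fix-up terminates while a sufficient reserve of safe trades is still available in each $V_i$. The constant $1/432$ will emerge from this bookkeeping: several ratios involving $\epsilon$ must simultaneously stay below explicit thresholds governing how many disjoint trades a given edge supports, and composing the requirements for the three parts, their three pairwise cross-terms, and the trades used in each fix-up round is what will force the stated denominator. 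Finally, the $O(n^4)$ runtime follows from repairing each of the $O(n^2)$ problem edges by searching at most $O(n^2)$ candidate trade sites in the partial decomposition built so far.
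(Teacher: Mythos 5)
Your overall architecture --- Kirkman inside the parts, Latin squares across the parts, trades to reconcile the two layers --- is the same skeleton the paper uses (and that Gustavsson used before it). But there is a genuine gap at the step you compress into ``the corresponding partial Latin rectangle has few forbidden cells, and can be completed by iteratively locating intercalate-like trades.'' Completing a sparse partial Latin square is not a routine local-trade argument: the general form of this claim (the Daykin--Haggkvist conjecture that every $\frac14$-sparse partial Latin square is completable) is open, and the paper has to invoke a separate substantial theorem (Theorem \ref{bartthm}, a strengthening of the Chetwynd--Haggkvist--Gustavsson completion result that decouples the per-row/column/symbol density from the total number of filled cells) to finish this phase. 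An intercalate-style swap that legalizes one cell can create conflicts elsewhere, and your proposed termination invariant --- that the number of ``bad'' features strictly decreases at every iteration --- is precisely what fails without a much more careful potential argument; the paper instead tracks explicit $\epsilon$- and $\xi$-type parameters through every phase and verifies at the end that they land inside the hypotheses of the completion theorem.

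A second, smaller but still real gap: you cannot arrange the needed divisibility conditions merely by shuffling a bounded number of vertices between parts. For the Latin-square correspondence to apply, each tripartite piece must satisfy $\deg_+(v)=\deg_-(v)$ at every vertex, so that its complement decomposes into positively oriented cycles of length divisible by three; the paper needs a dedicated lemma that deletes up to $O(\xi n^2)$ edges along short oriented paths to force this, and those deleted edges must then themselves be absorbed by the trade machinery. Relatedly, absorbing the $O(\xi n^2)$ leftover edges inside each part is the most delicate step of the whole argument (an explicit case analysis of cycle-shrinking and cycle-merging trades over all labelings of hexagons), not a parenthetical; the paper also uses a finer two-level partition into nine blocks precisely so that four tripartite graphs' worth of trade capacity is available for this absorption, and so that no single tripartite graph is overloaded. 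As written, your proposal identifies the right obstacles but does not supply the tools that overcome them.
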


\section{Techniques: Latin Squares, Trades, Overloading, and Local vs. Global Constraints}

Our proof techniques mirror those of Gustavsson as opposed to Keevash and Barber's later work, as we are seeking an explicit polynomial-time algorithm that is guaranteed to always complete.  We review these concepts here.

\begin{defn}\label{tradedefn}
Given a graph $G$, a \textbf{graph decomposition} $\mathcal{H}$ is a collection $\{H_1, \ldots H_k\}$ of subgraphs of $G$, such that the edges of $G$ are partitioned by these $H_i$'s. 

Let $G$ be a graph with associated decomposition $\mathcal{H} = \{H_1, \ldots H_k\}$.  Pick any subset of these subgraphs $\{H_1', \ldots H_l'\}$.  The union $ \bigcup_{i=1}^l H_i'$ of these subgraphs creates some subgraph $J$ of $G$, that may in turn have some other graph decomposition $\{H_1^\star, \ldots H_m^\star\}$.  If we take $\mathcal{H}$ and exchange the  $\{H_1', \ldots H_l'\}$ subgraphs for the $\{H_1^\star, \ldots H_m^\star\}$ subgraphs, this new collection is still a decomposition of $G$.  We call any such pair  $\{H_1', \ldots H_l'\}$, $\{H_1^\star, \ldots H_m^\star\}$ a \textbf{trade} on $(G, \mathcal{H})$. 

 For example, the trade below exchanges a hexagon and seven triangles for nine triangles:
\begin{center}
\includegraphics[width=3in]{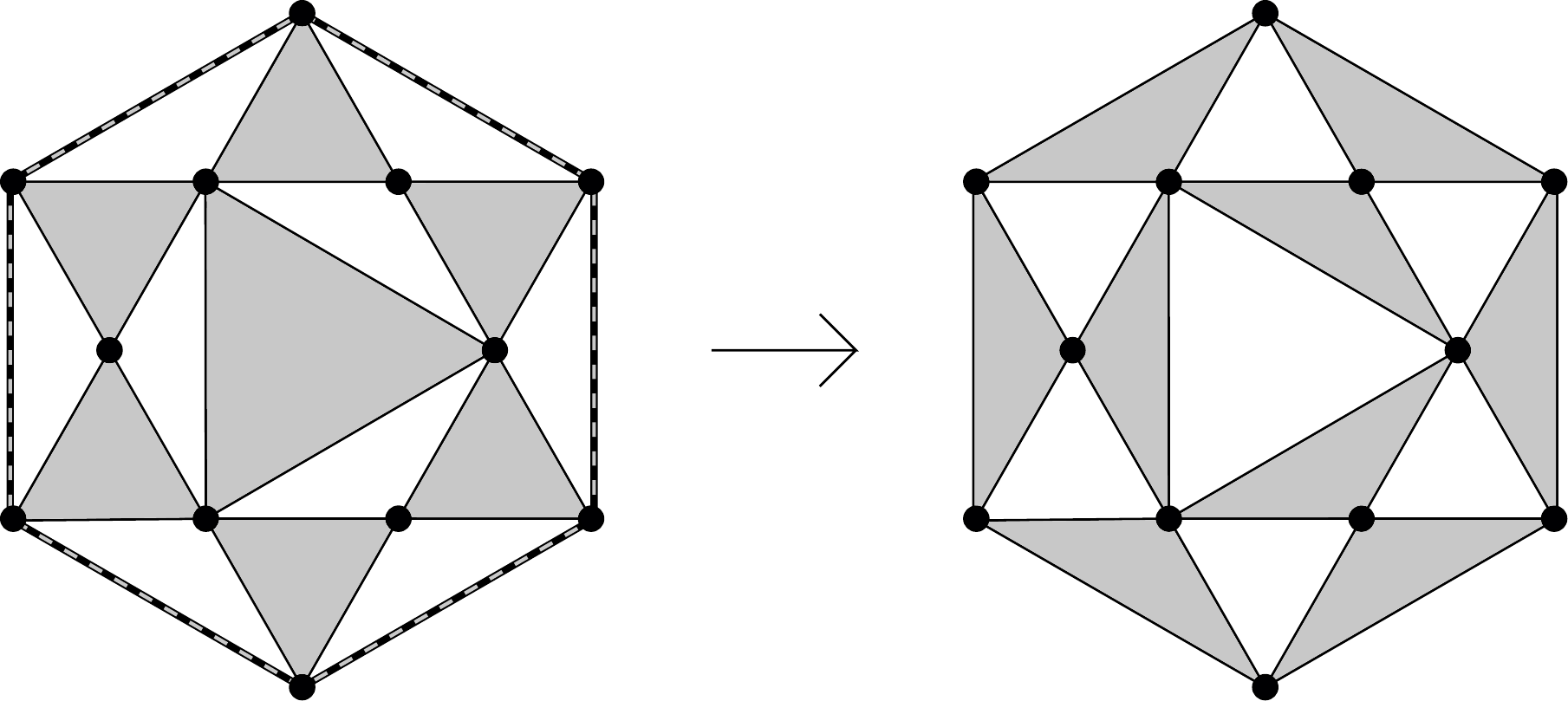}
\end{center}
\end{defn}

\begin{defn}
A \textbf{Latin square} of order $n$ is an $n \times n$ array filled with $n$ distinct symbols (typically, $\{1, \ldots n\}$), such that no symbol is repeated in any row or column.  A \textbf{partial Latin square} of order $n$ is simply an order-$n$ Latin square where we also allow cells to be blank.  We say that a partial Latin square $P$ is \textbf{completable}, and that $L$ is a \textbf{completion} of $P$, if the blank cells of $P$ can be filled with symbols in such a way that the resulting array $L$ is a Latin square.

A $n \times n$ partial Latin square $P$ is called \textbf{$\epsilon$-sparse} if no symbol occurs more than $\epsilon n$ times in $P$, and moreover no row or column contains more than $\epsilon n$ filled cells.
\end{defn}

Triangle decompositions of graphs are related to partial Latin squares.  To be precise: suppose that we have a tripartite\footnote{A \textbf{tripartite} graph $G = (V_1, V_2, V_3)$ is any graph whose vertices $V(G)$ can be partitioned into three sets $V_1, V_2, V_3$, such that no edge of $G$ has both endpoints within the same $V_i$.  Given any tripartite graph $G$ and any $v \in V_i$, we will let $\deg_+(v)$ denote the number of edges from $v$ to $V_{i+1}$ and $\deg_-(v)$ denote the number of edges from $v$ to $V_{i-1}$.  Note that the subscript labels are calculated mod 3 here; in general, our subscript labels are calculated with modular arithmetic where appropriate.} graph $G = (V_1, V_2, V_3)$, with $|V_i| = n$ for all $i$, that has a triangle decomposition.  If we think of $V_1$ as the ``rows,'' $V_2$ as the ``columns,'' and $V_3$ as the ``symbols,'' we can pair up each triangle in $G$'s decomposition with a triple $(r,c,s)$.  Because this is a triangle decomposition, no edge is used in more than one triangle; that is, no two triples $(r,c,s)$ agree at more than one place.  
\begin{center}
\includegraphics[width=3.5in]{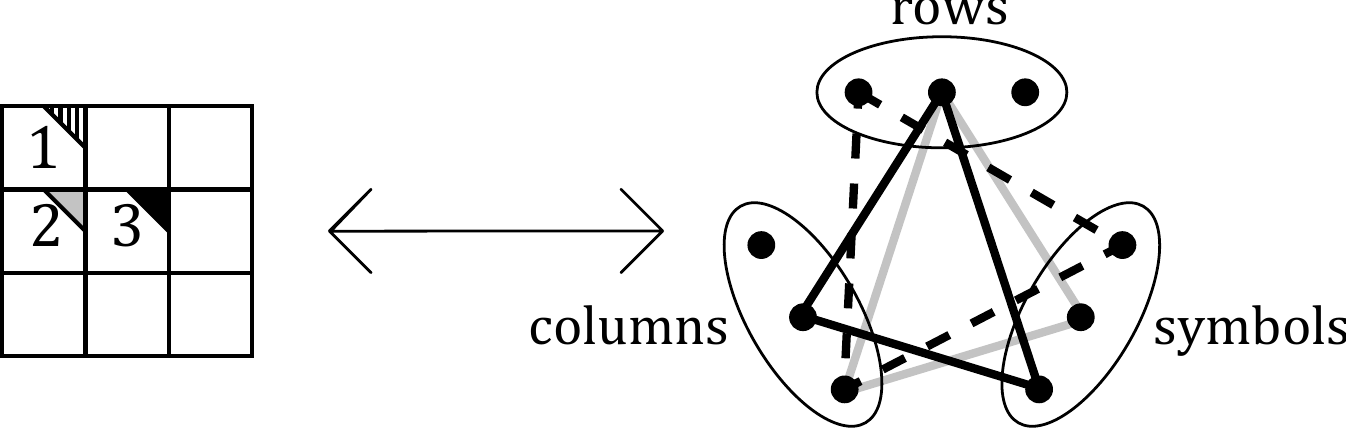}
\end{center}

Given these triples, we can form the corresponding $n \times n$ partial Latin square that has symbol $s$ in entry $(r,c)$ if and only if $(r,c,s)$ was one of our chosen triples; this is a partial Latin square by construction.  Moreover, this entire process is reversible; we can transform any partial Latin square into such a graph.

In particular, the transformation above turns any $ \epsilon$-sparse partial Latin square $P$ into a triangulated tripartite graph in which no vertex has more than $\epsilon n$-many neighbors in any one given part.  Therefore, triangulating the tripartite complement of this graph corresponds to finding a completion of $P$.

This correspondence suggests a connection between a conjecture of Daykin and Haggkvist\cite{Daykin_Haggkvist_1984} that all $ \frac{1}{4}$-dense partial Latin squares are completable and a tripartite version of the Nash-Williams conjecture.  As well, it suggests that results on completing partial Latin squares may be of use in finding triangle decompositions.

This link is the key to both Gustavsson's proof and ours.  To be specific: Gustavsson adapts results of Chetwynd and Haggkvist \cite{Chetwynd_Haggkvist_1985} to prove that all $\epsilon$-sparse $n \times n$ partial Latin squares are completable, for all $n \geq 10^7, \epsilon < 10^{-5}$, and uses this result along with a sequence of graph trades to triangulate graphs.

One of the authors of this paper in \cite{bartlett2013completions} came up with a strengthening of this Chetwynd-Haggkvist-Gustavsson result:
\begin{thm}\label{bartthm}
Any $ \epsilon_1$-sparse $n \times n$ partial Latin square containing no more than $\epsilon_2 n^2$ filled cells in total can be completed, if $\epsilon_1 < \frac{1}{12}, \epsilon_2 < \frac{ \left(1-12\epsilon_1\right)^{2}}{10409}$. Moreover, such a completion can be found via an algorithm with runtime $O(n^4)$.
\end{thm}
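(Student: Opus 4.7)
The plan is to use the bijection described above between partial Latin squares and partial triangulations of $K_{n,n,n}$: viewing $P$ as a set of vertex-disjoint triangles in a subgraph $G_P \subset K_{n,n,n}$, a completion of $P$ is the same as a triangulation of $H := K_{n,n,n} \setminus E(G_P)$. The $\epsilon_1$-sparseness hypothesis says every vertex of $H$ has degree at least $(1-\epsilon_1)n$ to each other part, and the $|V(P)| \le \epsilon_2 n^2$ condition bounds $|E(G_P)|$ by $3\epsilon_2 n^2$. The overall strategy is to fill $P$ greedily in a first phase, then to repair residual conflicts in a second phase using graph trades of the sort introduced in Definition~\ref{tradedefn}.

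For the greedy phase, I would iterate over the empty cells in a balanced order (e.g.\ round-robin by diagonals). For each empty cell $(r,c)$ I would form the forbidden-symbol set $F(r,c)$ of symbols already used in row $r$ or column $c$, whether from $P$ or from our prior placements, and pick some symbol outside $F(r,c)$. A naive greedy placement would let $|F(r,c)|$ grow uncontrollably, so I would enforce a load-balancing rule that refuses to place $s$ in $(r,c)$ when doing so would overuse $s$ globally or in the vicinity. Tracking the contributions to $F(r,c)$ from $P$ (at most $2\epsilon_1 n$) and from prior placements (controlled by load-balancing) gives a target bound $|F(r,c)| \le 12\epsilon_1 n$, so there are always $(1-12\epsilon_1)n$ safe symbols available; this is where the factor $1-12\epsilon_1$ in the theorem's hypothesis enters. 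A small fraction of cells will nevertheless end the greedy phase unfillable; for each stranded cell $(r,c)$ I would perform a trade: locate a triangulated $K_{2,2,2}$ in our current partial triangulation (equivalently, a $2\times 2$ Latin sub-square) that shares a row or column with $(r,c)$, and exchange its two triangulations, thereby altering the occupied symbols just enough to free a safe symbol for $(r,c)$. A double count of viable trade partners per stranded cell should yield a surplus of order $(1-12\epsilon_1)^2 n^2$, with the two factors $1-12\epsilon_1$ coming from the two ``swap slots'' of the $K_{2,2,2}$. The hypothesis $\epsilon_2 < (1-12\epsilon_1)^2 / 10409$ is then calibrated so that this surplus comfortably exceeds the $O(\epsilon_2 n^2)$ stranded cells together with all previously-consumed trade partners.

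The runtime decomposes as $O(n^2)$ empty cells times $O(n^2)$ work per cell: greedy placement of a cell needs only $O(n)$ to update the forbidden-set ledger, but a trade repair requires $O(n^2)$ to scan candidate $K_{2,2,2}$'s, giving $O(n^4)$ overall. The main obstacle I anticipate is controlling the tension between the two phases: the greedy phase must not exhaust too much of the ``free'' triangulated structure that the trade phase relies on, while still minimising the number of stranded cells that phase must later absorb. The precise constant $10409$ would emerge from carefully chaining the inequalities relating (i) the count of stranded cells produced by the greedy phase, (ii) the number of trade partners each stranded cell consumes, and (iii) the supply of $K_{2,2,2}$'s surviving in the partial triangulation, and tracking this bookkeeping cleanly is the technical heart of the argument.
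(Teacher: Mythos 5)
First, a point of orientation: the paper does not prove Theorem~\ref{bartthm} at all --- it is imported as a black box from \cite{bartlett2013completions}, which in turn builds on the Chetwynd--H\"aggkvist machinery. So there is no in-paper proof to match your proposal against; what I can do is assess whether your sketch would stand on its own. It would not, for two concrete reasons.

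The first gap is the greedy phase. You assert that a ``load-balancing rule'' keeps every forbidden set at size at most $12\epsilon_1 n$, but no such rule is specified and no argument is given that one exists; this is precisely the hard part, not a bookkeeping detail. Worse, cell-by-cell greedy filling of a Latin square cannot work even approximately: by the time a row is mostly filled, the remaining cells have forbidden sets of size close to $n$, regardless of how balanced the order of filling is, so ``a small fraction of cells will end the greedy phase unfillable'' is false --- a constant fraction of every row is forced into conflict. The established route (and the one the source paper strengthens) avoids this entirely by working \emph{symbol by symbol}: each symbol's partial permutation matrix is extended to a full permutation matrix via a deficiency form of Hall's theorem on an availability bipartite graph, and the quantity $(1-12\epsilon_1)n$ arises as a lower bound on degrees in that graph, not as a count of safe symbols for a single cell. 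The second gap is the repair phase. A single intercalate ($K_{2,2,2}$) swap is far too weak a trade: it changes the symbols occupying two cells in each of two rows and two columns, so it can free a symbol for your stranded cell only in very special configurations, and it generically strands a different cell in the process. Repairing defects in partial Latin square completions is known to require alternating chains (Latin trades) of unbounded length, and your double-counting claim that a surplus of $(1-12\epsilon_1)^2 n^2$ viable single-swap partners exists is asserted rather than derived. Since both the availability bound and the repair mechanism --- the two places where the hypotheses $\epsilon_1 < 1/12$ and $\epsilon_2 < (1-12\epsilon_1)^2/10409$ must actually be used --- are left as declarations of intent, the proposal is a plausible-sounding outline rather than a proof, and its central architectural choice (cell-by-cell greedy plus local $2\times 2$ trades) is one that fails for structural reasons before the constants ever enter.
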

When $\epsilon_1 = \epsilon_2 \leq 9.8 \cdot 10^{-5}$, this is a simple strengthening of the earlier result.  Unlike before, however, we can now decouple the ``local'' constraint on how many entries are used in any row, column or symbol from the ``global'' constraint on how many cells are filled overall.  This lets us get results that are locally very close to the Daykin-Haggkvist conjecture while still allowing $O(n^2)$ many cells to be filled overall; for instance, Theorem \ref{bartthm} shows that any $1/13$-sparse $n \times n$ partial Latin square is completable if it contains at most $5 \cdot 10^{-7} \cdot n^2$ filled entries overall.  

This suggests a similar decoupling of local and global constraints may be possible when we go to triangulate graphs.  In other words, it is natural to think that constants $\epsilon_1, \epsilon_2$ exist such that any graph $G$ on $n$ vertices with $\delta(G) \geq (1-\epsilon_1)n, |E(G)| \geq (1-\epsilon_2)\binom{n}{2}$ admits a triangle decomposition.  

This is the primary goal of our paper.  Our proof methods parallel those of Gustavsson; we work with Latin squares and graph trades.  Given any tridivisible graph $G$, break it into ten parts; three parts $V_1, V_2, V_3$ all of equal size, each subdivided further into sets $V_{i,j}, i, j \in \{1,2,3\}$ all of equal size, along with a tenth part $V_{\textrm{rem}}$ consisting of ``leftover'' vertices.

Apart from these leftovers, this vertex partition induces a partition of $G$'s edges into thirteen parts.  There are nine almost-complete graphs, one corresponding to each $V_{i,j}$, as well as four tripartite graphs: the tripartite graph induced by $(V_1, V_2, V_3)$, as well as the three induced by $(V_{i,1}, V_{i,2}, V_{i,3})$.  We triangulate these edges as follows:

\begin{enumerate}
\item By finding an appropriate collection of matchings, we create edge-disjoint triangles that contain all of the edges incident to $V_{\textrm{rem}}$.
\item We then use Kirkman's theorem alongside recent results on rainbow matchings in edge-colored graphs to put almost all of the edges in each $V_{i,j}$ into triangles.  
\item We then use trades that turn triangles from our tripartite subgraphs into triangles that use up all of the ``leftover'' edges in our $V_{i,j}$'s, along with miscellaneous odds and ends.
\item We then use our correspondence between tripartite graphs and partial Latin squares to triangulate the edges in our four tripartite graphs.
\end{enumerate}

\subsection{Useful Notation}\label{notate}

Take any tridivisible graph $G$.  We introduce notation here that will help us find triangle decompositions of $G$.

First, for any such $G$, choose natural numbers $N, n,r$ such that $|V(G)| = N + r = 9n + r$, where $n$ is congruent to either one or three modulo six, and $r$ is no more than $35$.  Then, divide $G$'s vertices into ten parts; three sets $V_i, i \in \{1,2,3\}$ each of size 3n, each subdivided further into sets $V_{i,j}, i, j \in \{1,2,3\}$ of size $n$, along with a tenth part $V_{\textrm{rem}}$ of size $r$.   

For each pair $(i,j)$, let $G_{i,j}$ denote the graph induced by the vertex set $V_{i,j}$, and for each pair of pairs $(i,j)\neq (k,l)$, let $G_{(i,j), (k,l)}$ denote the bipartite graph induced by the bipartition $V_{i,j}, V_{k,l}$.  As well, for each $i$ let $T_i$ denote the tripartite graph induced by the tripartition $(V_{i,1}, V_{i,2}, V_{i,3})$, and $T$ denote the tripartite graph induced by the tripartition $(V_1, V_2, V_3)$.  Finally, let $L$ denote the subgraph of $G$ containing all of its vertices but none of its edges; we consider $L$ to consist of the ``leftover'' edges of $G$, and will add edges to $L$ as our proof progresses.

Under this vertex partition, the edges of $G$ have been partitioned into the distinct subgraphs $G_{i,j}, T_i, T, L$ described above.  We define families of constants $\epsilon, \xi$ that help us quantify precisely how ``almost-complete'' these parts are.  To be precise:
\begin{itemize}
\item If $H$ is a graph that we have explicitly described as a $m$-partite graph with $m$ parts $V_1, \ldots V_m$ of size $k$, define $\xi_H$ as the value of $\xi$ such that $E(H) = \binom{m}{2} k^2 - \xi k^2$.  As well, define $\delta(H)$ by
\begin{align*}
\delta(H) = \min_{v \in V(H)} \min_{j: v \notin V_j} \deg_j(v),
\end{align*}
and set $\epsilon_H$ equal to the value of $\epsilon$ such that $\delta(H) = (1-\epsilon)k.$
\item For any graph $H$ on $k$ vertices that we have not explicitly described as a $m$-partite graph of some kind, let $\xi_H$ denote the value of $\xi$ such that $E(H) = \binom{k}{2} - \xi k^2$.  Similarly, let $\epsilon_H$ denote the value of $\epsilon$ such that $\delta(H) = (1-\epsilon)k$.
\item Finally, for a graph $G$ and subgraph $H$ of $G$, let $\overline{G \setminus H}$ denote the complement of $G \setminus H$, and $\epsilon^c_H, \xi^c_H$ denote the values $\epsilon_{\overline{G \setminus H}}, \xi_{\overline{G \setminus H}}$ respectively. 
\end{itemize}

We think of these $\xi$ values as measuring how far parts of our graph are from a complete graph in a ``global'' sense, as they capture the total number of edges missing from our graph.   Similarly, we think of the $\epsilon$ values as measuring how far our graph is from a complete graph in a ``local'' sense: i.e. how many edges, at most, is any single vertex missing.  In particular, values of $\xi, \epsilon$ near zero correspond to almost-complete graphs in the senses above.

In the following sections, we show how to triangulate edges in this graph $G$ piece-by-piece, provided that each part is ``almost-complete'' in an appropriate sense (as captured by the constants defined above.)  

\subsection{Decomposing $V_{\textrm{rem}}$'s edges}  \label{remsection}
The first step in our proof is the simplest; we decompose the edges incident to $V_{\textrm{rem}}$ by finding a collection of appropriate matchings in our graph $G$.

Take any vertex $v \in V_{\textrm{rem}}$, and let $N(v)$ denote the collection of all vertices adjacent to $v$ in $G$. Arbitrarily partition $N(v)$ into two sets $N_1, N_2$ of equal size, and consider the bipartite graph induced by $(N_1, N_2)$, which we claim for small $\epsilon_G$ satisfies Hall's property \cite{Hall_1945}.

To see this, take any nonempty subset $S \subseteq N_i$. Notice that if $|S| \leq  \frac12 (|V(G)| - 3 \epsilon_G |V(G)|)$ we trivially have $|N(S)|  \geq |S|$, as $S$ contains a single vertex and the minimal  degree of any vertex in $(N_1, N_2)$ is at least $ \frac12 (|V(G)| - 3 \epsilon_G |V(G)|)$.  Conversely, if $|S| >   \frac12 (|V(G)| - 3 \epsilon_G |V(G)|)$ and $\epsilon_G < 1/6$ we have $|S| > \frac32 (\epsilon_G |V(G)|)$, and therefore that every $v \in N_{i+1}$ has at least one neighbor in $S$.  In particular, this means that $|N(S)| = |N_{i+1}| \geq |S|$; so $(N_1, N_2)$ satisfies Hall's property, and therefore we have a perfect matching in $(N_1, N_2)$.  

The union of these edges with all of the edges incident to $v$ forms a collection of $|N_i|$ edge-disjoint triangles.  Remove these triangles from $G$ along with $v$, and repeat this process until $V_{\textrm{rem}}$ is empty.  Because $V_{\textrm{rem}}$ contains at most 35 vertices, we can do this as long as $\epsilon_G \leq  \frac{1}{6} -\frac{70}{|V(G)|}$; this leaves us with a graph $G'$ in which $\epsilon_{G'} \leq \epsilon_G +  \frac{70}{|V(G)|}$ and $\xi_{G'} < \xi_G + \frac{105}{2|V(G)|}$.

Notice that in this graph, we have
\begin{align*}
\epsilon_{T'} \leq 3\epsilon_{G'}, \epsilon_{T_i'} \leq 9 \epsilon_G, \xi_{T'} \leq 9\xi_{G'}, \xi_{T_i'} \leq 81\xi_{G'}
\end{align*}
as in the worst-case scenario all of the edges missing from $G'$ are concentrated in one of the subgraphs $T, T_i$.


\subsection{Decomposing almost all $G_{i,j}$ edges}
The following lemma, when applied to any of our $G_{i,j}$ graphs, uses matchings and Kirkman's result to decompose almost all of its edges into triangles.
\begin{lem}\label{paddylem1}
Suppose that $H$ is a graph on $n$ vertices, where $n$ is congruent to 1 or 3 mod 6.  Then for any $\gamma > 0$, there is a subgraph $R$ of $H$, consisting entirely of edge-disjoint triangles, such that $\epsilon_R <  \epsilon_H + 4\sqrt{3\xi_H}$ and $|E(H \setminus R)| \leq (2\xi_H + \sqrt{3\xi_H})n^2.$

Moreover, such a subgraph can be constructed in $O(n^4)$ steps.
\end{lem}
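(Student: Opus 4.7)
The plan is to apply Kirkman's theorem to $K_n$, valid since $n \equiv 1, 3 \pmod{6}$, producing a Steiner triple system $\mathcal{S}$. Call an edge of $K_n$ \emph{bad} if it lies outside $H$; there are $\xi_H n^2$ bad edges and each lies in exactly one triangle of $\mathcal{S}$. Let $\mathcal{F} \subseteq \mathcal{S}$ denote those triangles containing at least one bad edge, so $|\mathcal{F}| \leq \xi_H n^2$, and take as an initial candidate $R_0 = \bigcup(\mathcal{S} \setminus \mathcal{F})$: a collection of edge-disjoint triangles contained entirely in $H$. Since at most $3\xi_H n^2$ edges of $K_n$ are discarded and $H$ was already missing $\xi_H n^2$ of them,
\begin{equation*}
|E(H \setminus R_0)| \;\leq\; \tbinom{n}{2} - \xi_H n^2 - \left(\tbinom{n}{2} - 3\xi_H n^2\right) \;=\; 2\xi_H n^2,
\end{equation*}
comfortably meeting the edge-count target.

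The delicate step is controlling $\delta(R_0)$. For each vertex $v$ let $t_v$ be the number of $\mathcal{F}$-triangles through $v$, so $\deg_{R_0}(v) = n - 1 - 2t_v$; I would bound $t_v \leq a_v + X_v$, where $a_v \leq \epsilon_H n$ is the bad-degree of $v$ and $X_v$ is the number of $\mathcal{S}$-triangles through $v$ whose edge opposite $v$ is bad. Because each bad edge is opposite exactly one vertex of its triangle, $\sum_v X_v \leq \xi_H n^2$, and Markov's inequality then shows that the exceptional set $B = \{v : X_v > 2\sqrt{3\xi_H}\,n\}$ has $|B| \leq \sqrt{3\xi_H}\,n/6$. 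For $v \notin B$ a direct computation of the good-edge loss at $v$ yields $\deg_{R_0}(v) \geq (1 - \epsilon_H - 4\sqrt{3\xi_H})n$, meeting the degree target.

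The main obstacle is repairing vertices of $B$, whose degrees in $R_0$ may be far too small. Here the plan is to mirror the Hall-type matching argument used on $V_{\textrm{rem}}$ in Section~\ref{remsection}: for each $v \in B$, the \emph{leftover neighborhood} of $v$ inside $H \setminus R_0$ has size $2t_v - a_v$, and $H \setminus R_0$ restricted to this set remains almost complete. A near-perfect matching within it can be found as in the $V_{\textrm{rem}}$ analysis, and converting matched pairs into triangles through $v$ gives new edge-disjoint triangles that we adjoin to $R_0$. Adding triangles only shrinks $E(H \setminus R)$, so the edge-count bound $(2\xi_H + \sqrt{3\xi_H})n^2$ remains intact with $\sqrt{3\xi_H}\,n^2$ of slack; by construction these repairs restore $\deg_R(v) \geq (1 - \epsilon_H - 4\sqrt{3\xi_H})n$ at every $v \in B$.

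For the runtime, Kirkman's construction of $\mathcal{S}$ runs in $O(n^3)$, computing $\mathcal{F}$ and the $X_v$'s is $O(n^3)$, and each of the at most $\sqrt{3\xi_H}\,n/6$ matching repairs runs in $O(n^3)$ via e.g.\ Hopcroft--Karp, totalling $O(n^4)$.
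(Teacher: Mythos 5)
Your first half coincides with the paper's: apply Kirkman to $K_n$, discard every Steiner triple containing an edge missing from $H$, and observe that at most $2\xi_H n^2$ good edges are lost; the counting of $t_v \leq a_v + X_v$ and the Markov bound on the exceptional set are a reasonable (and correct) way to see that only few vertices are badly damaged. The gap is in the repair step. The graph $H \setminus R_0$ has at most $2\xi_H n^2$ edges in total, so it is globally very sparse, and its restriction to the leftover neighborhood $U_v$ of a vertex $v \in B$ is \emph{not} almost complete: for $u, w \in U_v$ the edge $\{u,w\}$ belongs to $H \setminus R_0$ only if the Steiner triple containing $\{u,w\}$ was itself discarded, which happens for only an $O(\xi_H)$ fraction of pairs. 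So the Hall-type argument from the $V_{\textrm{rem}}$ section, which needs minimum degree about half the part size, does not apply, and the near-perfect matching you want need not exist. More fundamentally, no repair that only \emph{adds} triangles can succeed: to restore $\deg_R(v)$ you must place the leftover edges at $v$ into new triangles whose third edges are currently locked inside surviving triangles of $R_0$, so you are forced to \emph{delete} triangles from $R_0$ as well, i.e., to perform a trade.

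This is exactly what the paper's mechanism does and what your proposal is missing: it finds a matching in $N_H(v)$ inside all of $H$ (not inside $H \setminus R_0$), requires it to be \emph{rainbow} with respect to the proper edge-colouring induced by the Steiner system (edge $\{x,y\}$ gets colour $i$ when $\{v_i,x,y\}$ is a triple), and then trades: it deletes from $R$ the old triangles through $v$, the triangles containing the matching edges, and the triangles coloured by the matched endpoints, and adds the star of new triangles through $v$. The rainbow condition is what caps the collateral damage at six edges per repaired vertex per other vertex, which is the bookkeeping that lets the final optimization over $\gamma = \sqrt{\xi_H/3}$ close. A secondary issue: even for $v \notin B$ your bound is $\deg_{R_0}(v) \geq n - 1 - 2a_v - 2X_v \geq (1 - 2\epsilon_H - 4\sqrt{3\xi_H})n - 1$, which carries $2\epsilon_H$ rather than the $\epsilon_H$ the lemma asserts, so even the unexceptional vertices would need repair under your scheme. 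You would need to import the paper's rainbow-matching trade (or an equivalent deletion mechanism) to make the argument go through.
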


\begin{proof}

Take the $n$ vertices $\{v_1, \ldots ,v_n\}$ of $H$, and construct the complete graph on these $n$ vertices; call this graph $K$.  As discussed earlier, it is known \cite{Kirkman_1847} that a triangle decomposition of $K$ is equivalent to a Steiner triple system and thus exists, as $n$ is congruent to 1 or 3 mod 6.  Let $R$ denote such a triangle decomposition of $K_n$.

Given this triangle decomposition, create a $n$-edge-coloring of $K$ as follows: for any $i \in \{1, \ldots n\}$ and any edge $\{x,y\}$, color this edge $i$ if and only if the triangle $\{v_i, x,y\}$ is a triangle in $R$.  This is a proper edge-coloring of $K$ using $n$ colors, one for each vertex; moreover each color $i$ corresponds to a perfect matching of the set $\{v_1, \ldots v_n\} \setminus \{v_i\}$.  This coloring induces a proper edge coloring of $H$, where we give each edge in $H$ its corresponding color in $K$.

Turn $R$ into a collection of edge-disjoint triangles in $H$ by simply removing from $R$ any triangle that is not a subgraph of $H$.  Consider the number of edges in $R$.  Because every triangle we've removed from $R$ corresponds to at least one edge not in $H$, there are at most $2 \xi_H n^2$ edges in total in $H$ that are missing from $R$.  

In some senses, this set $R$ is already an almost-triangulation in a global sense: it contains all but at most $2 \xi_H n^2$ of the edges it could have.  However, it is possible that some individual vertices have very low or zero degree in the subgraph $R$, and therefore that $R$ is not an almost-triangulation in a local sense.  We use matchings to fix this here, in a similar fashion to our work with $V_{\textrm{rem}}$.  For any vertex $v \in H$, consider the following process for creating a large rainbow\footnote{In an edge-colored graph, a rainbow matching is a matching in which no two edges share the same color.} matching in $N(v)$ :
\begin{enumerate}
\item[(0)] Set $M = \emptyset,$ and  $U$ equal to the graph induced by $N(v)$; let $C_M$ denote the colors of edges currently in the matching $M$ and $C_U$ the colors of edges not currently used in $M$.
\item If there is an edge in $U$ whose color is in $C_U$, place that edge in $M$ and then delete it and its endpoints from $U$.  Repeat this process until no such edges exist; this takes $O(n^3)$ steps, as there are $O(n^2)$ edges in $U$ to check and we can perform at most $O(n)$ such transfers before $U$ is empty.
\item If there are two vertices $u, v \in U$ and an edge $\{x,y\} \in M$ such that $\{u,x\}, \{y,v\}$ are both colored with distinct colors from $C_U$, delete the edge $\{x,y\}$ from $U$ and add the edges $\{u,x\}$ and  $\{y,v\}$ to $M$.  Again, repeat this process until no such configurations exist; this takes $O(n^3)$ steps, as we have $O(n)$ edges in $M$ to check, each has $O(n)$ endpoints to examine, and we can perform $O(n)$ exchanges before $U$ is empty.

\end{enumerate}

Notice that for any edge $e$ in $M$, if both endpoints of $e$ are incident to $C_U$-colored edges connected to $U$ and there are at least five such edges, then we can always find a (2)-configuration.  Consequently, when this algorithm finishes, any edge $e$ in $M$ is incident to at most $\max(4, |U|)$ $C_U$-colored edges incident to $U$.   In particular, there are at most $(\max( 2, \frac{|U|}2 )) \cdot  (n-|U|)$ $C_U$-colored edges from $M$ to $U$.

Conversely, at the end of this process every vertex in $U$ has at least $n - \epsilon_H n - \frac12(n-|U|)$ $C_U$-colored edges to $M$, and so the total number of $C_U$-edges leaving $U$ to $M$ is at least $|U| \cdot (n - \epsilon_H n - \frac12(n-|U|))$.  Therefore, we either have $|U| \leq 4$ or

\begin{align*}
|U| \cdot \frac{n-|U|}{2} \geq |U| (n - \epsilon_H n - \frac12(n-|U|))  \quad \Rightarrow \quad |U| \leq \epsilon_H n.
\end{align*}

Choose the $k-1$ vertices $v_1, \ldots v_{k-1}$ with the largest differences between $\deg_H(v_i)$ and $\deg_R(v_i)$.  One by one, perform the following steps: find such a rainbow matching $M$.  Delete all of the triangles in $R$ that use edges in this rainbow matching, along with all triangles in $R$ incident to $v_i$; then, add to $R$ all of the triangles formed by combining our rainbow matching with all of the edges incident to $v_i$.  This process ensures that $\deg_H(v_i) - \deg_R(v_i) \leq \max(4, \epsilon_H n)$.  As well, for any other vertex $w \in R$, we have decreased $\deg_R(w)$ by at most six; we deleted at most one triangle containing $w$ incident to $v$, at most one triangle containing an edge $e$ from $M$ where $w$ is an endpoint of $e$, and at most one triangle containing an edge $e$ from $M$ where $e$ is colored $w$.  Finally, each run of this process decreases the total number of edges in $R$ by at most $3n$, as we deleted at most $n +n-\frac{|U|}{2}$ triangles and added back in $n-\frac{|U|}{2}$ triangles.

Consider the vertex $v_{k}$ whose difference $\deg_H(v_{k}) - \deg_R(v_{k})$ was the $k$-th largest at the start of this process; in the worst-case scenario, this difference was equally as large as all of our other differences, and it grew by six after each run of our algorithm.  In particular, this means that 
\begin{align*}
\deg_H(v_{k}) - \deg_R(v_{k}) \leq 6(k-1) + \frac{2 \xi_H n}{k}.
\end{align*}
Conversely, for any of the vertices $v_i$ our algorithm ran on, in the worst-case scenario, we have
\begin{align*}
\deg_H(v_{i}) - \deg_R(v_{i}) \leq 6(k-1) + \max(\epsilon_H n, 4).
\end{align*}
If we let $k = \lceil \gamma n \rceil $ and combine these results, this gives us that 
\begin{align*}
\epsilon_R < \max\left(   \epsilon_H +  6\gamma + \frac{2 \xi_H}{\gamma}, 6\gamma+   \max\left(\epsilon_H, \frac4n\right)   \right) 
\end{align*}
Notice that we can assume $\frac{1}{n} \leq \sqrt{\xi_H}, \frac{1}{n} \leq \epsilon_H$, as otherwise we would have $\xi_H n^2 < 1$ or $\epsilon n < 1$, which in particular implies that $H$ is complete and trivializes our problem.  Consequently, setting $\gamma = \sqrt{\frac{\xi_H}{3}}$ optimizes the bound above and yields
\begin{align*}
\epsilon_R < \epsilon_H + 4\sqrt{3\xi_H}.
\end{align*}

As well, because we start with $|E(H \setminus R)| \leq \xi_H n^2$, delete $3n$ edges on each run of the process above, and repeat this process $\lceil \gamma n\rceil -1$ times, we have
\begin{align*}
|E(H \setminus R)| <  (2\xi_H + 3\gamma)n^2 \leq (2\xi_H + \sqrt{3\xi_H})n^2.
\end{align*}

Finally, it bears noting that we can find this $R$ in $O(n^4)$ steps, as we need to perform only $\gamma n$ iterations of an $O(n^3)$-step process.
\end{proof}

Take our graph $G'$, apply Lemma \ref{paddylem1} above to each $G_{i,j}'$ subgraph, and remove the resulting sets of triangles from $G'$.  Let $G^{(2)}$ denote the resulting graph; within $G^{(2)}$, let $L^{(2)}$ contain the leftover edges from the nine $G_{i,j}$ graphs. 

The process above increased the value of each $\epsilon_{G_{i,j}}$ by at most $4\sqrt{3\xi_{G'}}$; accordingly, we have 
\begin{align*}
\delta(L^{(2)}) < \epsilon_{G'}N + 9\cdot 4\sqrt{3\xi_{G'}}) n = (\epsilon_{G'} + 4\sqrt{3\xi_{G'}})N.
\end{align*}

As well, each run of Lemma \ref{paddylem1} leaves us with at most $(2\xi_{G_{i,j}'} + \sqrt{3\xi_{G_{i,j}'}})n^2$ edges in each $G_{i,j}^{(2)}$. The total number of edges in $L$ is thus maximized when the edges in $\overline{G'}$ are equally distributed over the graphs $\overline{G_{i,j}}$, in which case we have
\begin{align*}
|E(L^{(2)})| < \left(2\xi_{G'} + \sqrt{\frac{\xi_{G'}}{3}}\right)N^2.
\end{align*}

In other words,  we have put ``most'' of the edges in each $G_{i,j}$ into triangles.  Note that this process does not affect the $T,T_i$ subgraphs.

\subsection{Ensuring out-degree equals in-degree in our tripartite subgraphs}
The next part helps clean up our tripartite subgraphs for later work:
\begin{lem}\label{landonlem}
Take any tripartite graph $R$ on $3n$ vertices with a tripartition $V_1, V_2, V_3$, where $|V_1| = |V_2| = |V_3| = n$.  If $\epsilon_R < \frac{1}{12}, \xi_R < \frac{\epsilon_R}{6}$, then we can delete at most $ 6\xi_R n^2$ edges from $R$ in such a way that $\deg_+(v) = \deg_-(v)$ for each vertex in $R$, without changing $\epsilon_R$ or removing more than $3\xi_R n$ edges from any vertex.  Moreover, these deletions can be performed in $O(n^3)$ steps.
\end{lem}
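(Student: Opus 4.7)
The plan is to impose a cyclic orientation on $R$'s edges (orienting each edge from $V_i$ to $V_{i+1}$), so that the balance condition $\deg_+(v) = \deg_-(v)$ becomes the condition that this oriented graph is Eulerian.  Our task then becomes finding a small ``correction'' subgraph $D \subseteq R$ whose removal makes the orientation Eulerian.

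First I would set up the bookkeeping.  For each vertex $v$ define the local imbalance $f(v) = \deg_+(v) - \deg_-(v)$; because $\deg_+(v), \deg_-(v) \in [(1-\epsilon_R)n, n]$ we get the pointwise bound $|f(v)| \leq \epsilon_R n$, and because each missing edge of $R$ contributes to $|f(\cdot)|$ at exactly its two endpoints we get the global bound $\sum_v |f(v)| \leq 2\xi_R n^2$.  The three part-level sums $\sum_{v \in V_i} f(v) = |E(R[V_i, V_{i+1}])| - |E(R[V_{i-1}, V_i])|$ each have magnitude at most $\xi_R n^2$ and together telescope to zero.

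Next I would reduce to a bipartite subproblem.  Deleting an edge $\{u,w\}$ with $u \in V_i$ and $w \in V_{i+1}$ sends $(f(u), f(w)) \mapsto (f(u){-}1, f(w){+}1)$, so a valid $D$ corresponds to a choice, for each pair $(V_i, V_{i+1})$, of a bipartite subgraph $D_i \subseteq R[V_i, V_{i+1}]$ with vertex-degrees $a_i(v)$ on $V_i$ and $b_i(w)$ on $V_{i+1}$ satisfying $a_i(v) - b_{i-1}(v) = f(v)$ at every $v$.  Writing $a_i(v) = \max(f(v), 0) + c(v)$ and $b_{i-1}(v) = \max(-f(v), 0) + c(v)$ for nonnegative slack values $c(v)$, I would solve the three cyclic sum-balance constraints on the $|D_i|$'s for the $c(v)$'s and pick the minimum nonnegative solution.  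Because $R[V_i, V_{i+1}]$ has minimum degree at least $(1-\epsilon_R)n$ and each prescribed degree is at most roughly $\epsilon_R n < n/12$, a Hall-type counting argument in the spirit of the one in Section \ref{remsection} shows that a bipartite subgraph realizing the prescribed degree sequence exists; it can be produced explicitly by a bipartite matching/flow algorithm in $O(n^3)$ time.

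Finally I would verify the two budget bounds.  The global count $|D| = \sum_i |D_i|$ is bounded by $\sum_v |f(v)| + 3\sum_v c(v) \leq 6\xi_R n^2$, using the bound on $\sum_v |f(v)|$ together with a comparable bound on $\sum_v c(v)$ coming from the cyclic balance constraints (each of whose right-hand sides has magnitude at most $\xi_R n^2$).  The per-vertex bound $3\xi_R n$ is the most delicate point and the main technical obstacle I anticipate: since $|f(v)|$ can a priori be as large as $\epsilon_R n$, the argument must exploit the pointwise relation $|f(v)| \leq m_+(v) + m_-(v)$, where the right side counts missing edges at $v$, together with the gap between $\xi_R$ and $\epsilon_R$ granted by the hypothesis $\xi_R < \epsilon_R/6$, to distribute the slack $c(v)$ away from vertices where $|f(v)|$ is already large.
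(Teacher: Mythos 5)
Your approach is genuinely different from the paper's. The paper fixes the imbalance iteratively: it repeatedly picks a vertex $x$ with $\deg_+(x) > \deg_-(x)$ and a vertex $y$ with $\deg_-(y) > \deg_+(y)$, greedily builds a positively-oriented path of length at most four from $x$ to $y$ through high-degree intermediate vertices (which keeps their own balance intact, since such a path uses one in-edge and one out-edge at each intermediate vertex), and deletes that path; the bound of $\frac32 \xi_R n^2$ iterations times path length four gives $6\xi_R n^2$, and an averaging argument over the at least $n/2$ "not-bad" intermediate choices controls $\epsilon_R$. You instead formulate the whole problem at once as realizing a prescribed degree sequence in each of the three nearly-complete bipartite graphs $R[V_i,V_{i+1}]$. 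That is a clean and arguably more transparent reduction, and your bookkeeping of $f(v)$, the slack $c(v)$, and the three cyclic sum constraints is essentially right (though note $|D| = \frac12\sum_v|f(v)| + \sum_v c(v)$, not $\sum_v|f(v)| + 3\sum_v c(v)$; the global budget still works out).

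Two genuine gaps remain. First, the existence of the degree-constrained bipartite subgraph is the entire content of the lemma in your formulation, and you only gesture at "a Hall-type counting argument"; you need an actual Gale--Ryser or max-flow--min-cut verification that a subgraph of $R[V_i,V_{i+1}]$ with the prescribed (small, at most roughly $\epsilon_R n + c(v)$) degrees exists, using $\epsilon_R < \frac{1}{12}$ to check the cut conditions. This is believable but not automatic, and it is where the hypothesis on $\epsilon_R$ must enter. Second, the per-vertex bound cannot be rescued by "distributing the slack away from vertices where $|f(v)|$ is large": \emph{any} deletion scheme must remove at least $|f(v)|$ edges at $v$, and $|f(v)|$ can be as large as order $\epsilon_R n$, which exceeds $3\xi_R n$ under the hypothesis $\xi_R < \epsilon_R/6$ for large $n$. (The paper's own proof has the same feature; what actually saves the $\epsilon_R$-preservation claim is that the mandatory deletions at an imbalanced vertex $v$ only lower the \emph{larger} of $\deg_+(v), \deg_-(v)$ down to the smaller, so $\delta(R)$ is unaffected by them, and the averaging argument is only needed for the discretionary deletions --- your slack $c(v)$, the paper's intermediate vertices.) Your minimal solution $a_i(v) = f(v)^+ + c(v)$, $b_{i-1}(v) = f(v)^- + c(v)$ has exactly this structure, so you should argue the $\epsilon_R$-preservation that way and treat the literal "$3\xi_R n$ per vertex" clause as applying to the slack contribution, concentrating $c$ on high-degree vertices and spreading each $C_i \leq 2\xi_R n^2$ over at least $n/2$ of them.
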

\begin{proof}
Notice that because $|E(R)| = \sum_{v \in R}\deg_+(v) = \sum_{v \in R}\deg_-(v)$, the existence of a vertex $x \in R$ such that $\deg_+(x) > \deg_-(x)$ is equivalent to the existence of a vertex $y \in R$ such that $\deg_+(y) < \deg_-(y)$.  

Based on this observation, we describe a process that takes in any two such vertices $x, y \in V(G)$ with $\deg_+(x) > \deg_-(x)$, $\deg_-(y) > \deg_+(y)$, creates a positively-oriented path (i.e. one in which edges go from $V_i$'s to $V_{i+1}$'s) on at most four edges from $x$ to $y$, and then deletes this path from $G$.  Doing this decreases the gap between $\deg_+(x), \deg_-(x)$ and $\deg_+(y), \deg_-(y)$ by one each, and does not change the difference between $\deg_+, \deg_-$ for any other vertex. 

To do this, take any such $x,y \in V(G)$ with $\deg_+(x) > \deg_-(x)$, $\deg_-(y) > \deg_+(y)$.  Construct the appropriate positively-oriented path from $x$ to $y$ (illustrated below) vertex-by-vertex.
\begin{figure}[H]
\includegraphics[width=4in]{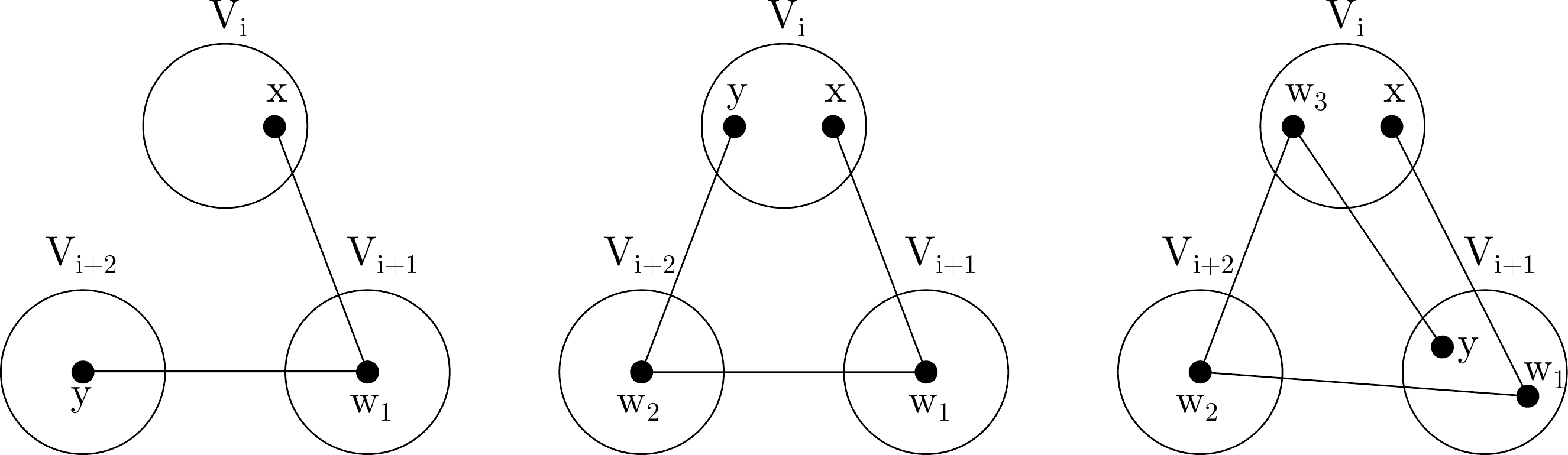}
\caption{Three possible paths.}\label{landonpaths}
\end{figure}

Consider the third case: here, we have at least $n-\epsilon_R n$ choices for $w_1$ adjacent to $x$.  Amongst these choices, pick the one for which $\min(\deg_+, \deg_-)$ is the highest.  Similarly, there are $n - \epsilon_R n$ choices of $w_2$ adjacent to $w_1$, and $n - 2\epsilon_R n $ choices of $w_3$ adjacent to $w_2, y$; again, choose the highest-degree vertices possible when making these choices.  Similar arguments construct paths for the first two cases; in all three cases, because $n - 2\epsilon_R n \geq \frac{5}{6}n \geq  1$ throughout this entire process we can always make such choices.

Do so, discard the resulting path from $R$, and repeat this process until $\deg_+(v) = \deg_-(v)$ for every vertex $v$; call the resulting graph $R'$.  Because there are at most $3 \xi_R n^2$ edges missing from $R$ in total and each run of the process above reduces the overall differences in degree by two, we need to run our algorithm at most $\frac{3}{2} \xi_R n^2$ many times.  Each time we run this process we are decreasing the value $\min(\deg_+, \deg_-)$ of any vertex by at most 1, and are doing so to at most one vertex in any part $V_i$ whose degree is the highest amongst our possible choices.

If $\xi_R < \frac{\epsilon_R}{6}$, then at most a third of the vertices in $R$ can be missing more than $\frac{\epsilon_R}{2}$ edges.  If we consider these choices to be ``bad'' when we pick our vertices $w_i$ and additonally assume that $\epsilon_R < \frac{1}{12}$, then there are at least $\frac{n}{2} $ not-bad choices available to us at our first step.  Choosing the vertex with highest degree ensures that we make such a choice. Moreover, as we keep choosing these vertices we will never wind up changing the value of $\epsilon_R$; in the worst-case scenario always choosing the highest-available degree vertex is still a better strategy than distributing our $\frac{3}{2} \xi_R n^2$ choices evenly over the $\frac{n}{2} $ available not-bad choices, which decreases the degree of each not-bad vertex by at most $3\xi_R n < \frac{\epsilon}{2}$, ensuring that $\epsilon_{R'} = \epsilon_R$.

At most $6\xi_R n^2$ edges are deleted in the process above, as we construct at most $\frac32 \xi_R n^2$ paths throughout this proof and each has length 4.   Finally, we have $O(n)$ steps to choose appropriate vertices $w_i$ for any given walk and $O(n^2)$ walks to construct in total, giving us $O(n^3)$ runtime as claimed.
\end{proof}

Apply this lemma to each of the four tripartite subgraphs $T^{(2)}, T_{i,j}^{(2)}$ of $G^{(2)}$, removing the deleted edges from each $T^{(2)}, T_i^{(2)}$ and adding them to the subgraph $L^{(2)}$; let $G^{(3)}$ denote the resulting graph.  In the worst-case scenario where all of the edges in $\overline{G}$ are in exactly one of the $ \overline{T_i^{(2)}}$ graphs,  we can see that $\delta(L^{(2)})$ increases by at most $27\xi_{G'}N$ and $|E(L^{(2)})|$ increases by at most $6\xi_{G'}N^2$.  Therefore, we have
\begin{align*}
\delta(L^{(3)}) \leq (\epsilon_{G'} + 4\sqrt{3\xi_{G'}} + 27\xi_{G'})N, \qquad |E(L^{(2)})| < \left(8\xi_{G'} + \sqrt{\frac{\xi_{G'}}{3}}\right)N^2.
\end{align*}

As well, we have
\begin{align*}
\epsilon_{T^{(2)}} \leq \epsilon_{T'}, \qquad \epsilon_{T_i^{(2)}} \leq \epsilon_{T_i'}, \qquad \xi_{T^{(2)}} \leq 7 \xi_{T'}, \qquad  \xi_{T_i^{(2)}} \leq 7\xi_{T_i'}.
\end{align*}
Finally, notice that the subgraphs $T^{(2)}, T_{i}^{(2)}$ are also now tridivisible.  This is not hard to see: take any tripartite graph with the property that $\deg_+(v) = \deg_-(v)$ for every vertex $v$.  We can immediately observe that this forces $\deg(v)$ to be a multiple of two; as well, noting that the sum of edges leaving $V_i$ to $V_{i+1}$ is the same as the sum of edges entering $V_{i+1}$ from $V_i$ tells us that the total number of edges in our graph is a multiple of three.

\subsection{Decomposing all ``leftover'' edges into triangles}
Here, we use trades to perform the trickiest work in this paper; decomposing the leftover edges $L$ into a set of triangles by using our tripartite graphs $T, T_i$.  

\begin{lem}\label{pathlem}(Leftover lemma.)
Take any tridivisible graph $G$ on $9n$ vertices, with subgraphs $T, T_i, L$ and vertex partitions $V_i, V_{i,j}$ as described in subsection \ref{notate}.  Let $H = G \setminus L$, and suppose that $H$ is tridivisible and 9-partite with respect to the partition $V_{i,j}$.  

Suppose that 
\begin{align*}
n - 4 \max ( \epsilon_{T_1}, \epsilon_{T_2},\epsilon_{T_3}, \epsilon_T + \frac12 \delta(L)) n - 4\delta(L)n \geq 3.
\end{align*}

Then, there is some collection $R$ of edge-disjoint triangles from $H$ that satisfies the following properties:
\begin{enumerate}
\item Each triangle in $R$ is either a subgraph of $T$, or of $T_i$ for some $i$.  
\item $L \cup R$ can be written as an edge-disjoint union of triangles.
\item $\epsilon_{T_i \setminus R} \leq \epsilon_{T_i} + \frac{\delta(L)}{n}, \epsilon_{T \setminus R} \leq  \epsilon_{T} + \frac{\delta(L)}{2n}.$
\item $\xi_{T_i \setminus R} \leq \xi_{T_i} + \frac{9|E(L)|}{2n^2},  \xi_{T \setminus R} \leq \xi_{T} + \frac{71|E(L)|}{36n^2}.$
\end{enumerate}
\end{lem}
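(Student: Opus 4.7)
Since both $G$ and $H$ are tridivisible, the leftover subgraph $L$ satisfies $|E(L)| \equiv 0 \pmod 3$, and every vertex has even degree in $L$. These parities are essential: any local configuration consisting of $k$ leftover edges together with $m$ triangles drawn from $R$ has $k + 3m$ edges, so decomposing such a configuration into triangles forces $k \equiv 0 \pmod 3$. My plan is therefore to group edges of $L$ into batches whose sizes are multiples of three, and then absorb each batch via a small local trade patterned on the ``hexagon-and-seven-triangles-for-nine-triangles'' model from Definition \ref{tradedefn}.

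For each leftover edge $uv$, the relevant ambient tripartite graph is determined by where $u, v$ lie in the $9$-partite structure: if $u,v$ are in distinct super-parts $V_i, V_k$, I look inside $T$ for a witness $w$ in the third super-part $V_m$ with $uw, vw \in E(T)$; if $u,v$ lie in distinct cells of the same super-part $V_i$, I look in $T_i$ for a witness in the third cell. Standard common-neighbor counting shows that the number of valid witnesses is at least $(1 - 2\max(\epsilon_{T_i}, \epsilon_T))n$ in the relevant subgraph, and the hypothesis $n - 4\max(\epsilon_{T_i}, \epsilon_T + \tfrac12 \delta(L))n - 4\delta(L)n \geq 3$ is calibrated precisely so that, even after we exclude vertices already incident to the partially-constructed $R$ or to previously processed leftover edges at $u$ and $v$, at least three choices of witness remain at every step. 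To close the cherries $\{uw, vw\}$ into full edge-disjoint triangles of $R$, I would process leftover edges in triples, combining the three cherries with an auxiliary triangle from $T$ or $T_i$ so that the overall edge set of the trade partitions cleanly; the particular trade used varies according to whether the three leftover edges in the batch are incident (share a common vertex, forming a star), form a short path, or are disjoint.

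The $\epsilon$ bounds follow from tracking degree increments: each $L$-edge incident to a vertex $v$ commits at most one $R$-edge at $v$ in the corresponding tripartite graph, giving $\epsilon_{T_i \setminus R} \leq \epsilon_{T_i} + \delta(L)/n$; for $T$ the factor $\tfrac12$ appears because each $L$-edge at $v$ forces a witness on only one of the two ``sides'' of $v$ in $T$. The global $\xi$ bounds $9|E(L)|/(2n^2)$ and $71|E(L)|/(36n^2)$ come from summing per-trade edge usage across all leftover edges, weighted by how many of the absorbing trades pull their auxiliary triangle from each of $T_1, T_2, T_3$ versus $T$. The main obstacle I anticipate is the bookkeeping: ensuring that witness and auxiliary-triangle choices remain mutually edge-disjoint throughout the iterative construction, and verifying that every possible geometric configuration of three batched leftover edges admits a valid closing trade. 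This will require a case analysis on the relative positions of the three edges within the $9$-partite structure, with the density hypotheses on $\epsilon_{T}, \epsilon_{T_i}$ and $\delta(L)$ used at each step to guarantee that enough ``fresh'' vertices are available to complete the trade.
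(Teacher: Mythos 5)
Your proposal identifies the right framework (absorb $L$ via trades that borrow triangles from $T$ and the $T_i$, then track $\epsilon$ and $\xi$ by counting per-trade usage), but the core combinatorial mechanism is not viable as described, and it misses the structural fact that drives the paper's argument. The paper first observes that $L$ is tridivisible, so every vertex of $L$ has even degree and $L$ decomposes into edge-disjoint \emph{cycles}; the trades are then organized around cycles, not around arbitrary triples of edges. Concretely, the basic trade exchanges a hexagon of $L$ together with seven triangles of $T$ or $T_i$ for nine new triangles (and a truncated version shortens a longer cycle by three edges), and the existence of such a trade for every possible distribution of the six hexagon vertices among the parts requires the explicit $22$-case classification in Figure \ref{22hex}. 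Your ``three cherries plus one auxiliary triangle'' trade does not partition into triangles for a generic batch: if the three leftover edges $u_iv_i$ are vertex-disjoint, the nine $T$-edges of $R$ must contain the six cherry edges $u_iw_i, v_iw_i$, and since no $R$-triangle may use the $L$-edge $u_iv_i$, the two edges of each cherry must lie in different $R$-triangles; chasing the forced incidences shows the required adjacencies among the $u_i, v_i, w_i$ simply need not hold. The divisibility $k \equiv 0 \pmod 3$ you check is necessary but far from sufficient --- the local vertex parities within each batch must also close up, which is exactly why cycles are the correct unit of work.

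The second missing piece is merging. That $|E(L)|$ is a multiple of three does not mean $L$'s cycles have lengths divisible by three: $L$ may contain, say, a $C_4$ and a $C_5$, and no batching of their nine edges into triples admits a local closing trade of your form. The paper handles this with a separate family of merging trades (Figures \ref{mergec4c5}, \ref{mergefignew}) that combine two cycles $C_a, C_b$ with $a+b \equiv 0 \pmod 3$ into cycles of length divisible by three, including delicate casework when the two cycles share one or more vertices. Without a cycle decomposition and a merging mechanism, your construction cannot even begin on such an $L$. Your heuristics for conclusions (3) and (4) are in the right spirit, but the specific constants ($\tfrac{9}{2}$, $\tfrac{71}{36}$, the factor $\tfrac12$ for $T$) come out of counting how many triangles each shrink and merge trade draws from each $T_i$ versus $T$, and how many times a vertex of $L$ can appear on the boundary of a trade; those counts cannot be recovered without first fixing the actual trade family.
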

\begin{proof}
Note that because $G, H$ are both tridivisible, so is $G\setminus H = L$; in particular, this means that the degree of every vertex in $L$ is even.  Consequently, the edges of $L$ can be decomposed into a collection of edge-disjoint cycles.  Let $\mathcal{C}$ denote this set of cycles.  



Our lemma proceeds in two stages:
\begin{itemize}
\item \textbf{Shrinking}: we describe a process that takes in a cycle $C_n$ and triangles from the graphs $T, T_i$, and outputs a cycle with length $n-3$ along with more triangles.
\item \textbf{Merging}: we describe a second process that lets us merge any two cycle $C_a, C_b$ in $\mathcal{C}$ into a collection of cycles whose lengths are all congruent to either 0 or $a+b$ mod 3.  These trades only use triangles from the graph $T$.
\end{itemize}

Appropriate application of these processes to the set $\mathcal{C}$ will reduce it to a collection of triangles.  If we are careful in selecting and applying these trades, the collection of all triangles $R$ used in these trades will have the properties claimed by our lemma, and complete our proof.

We start by describing how to shrink a $C_6$ to a $C_3$.  Take any such $C_6$ from $\mathcal{C}$, and let $w_1, w_2, w_3, w_4, w_5, w_6$ be the consecutive vertices of this $C_6$.  Label each of our vertices $w_j$ with an $i$ whenever $w_j \in V_i$.  Enumerating the collection of all such labellings of hexagons, up to rotation, reflection, and permuting the labels $\{1,2,3\}$, is a straightforward application of Burnside's lemma, and tells us that there are 22 such possible distinct labelings.
We draw 22 diagrams corresponding to these labellings in Figure \ref{22hex}, along with a collection of labeled triangles for each configuration.  Some of the vertices in our diagrams are colored; these will be used to further associate vertices with sets $V_{i,j}$ when appropriate.

\begin{figure}
\includegraphics[width=\textwidth]{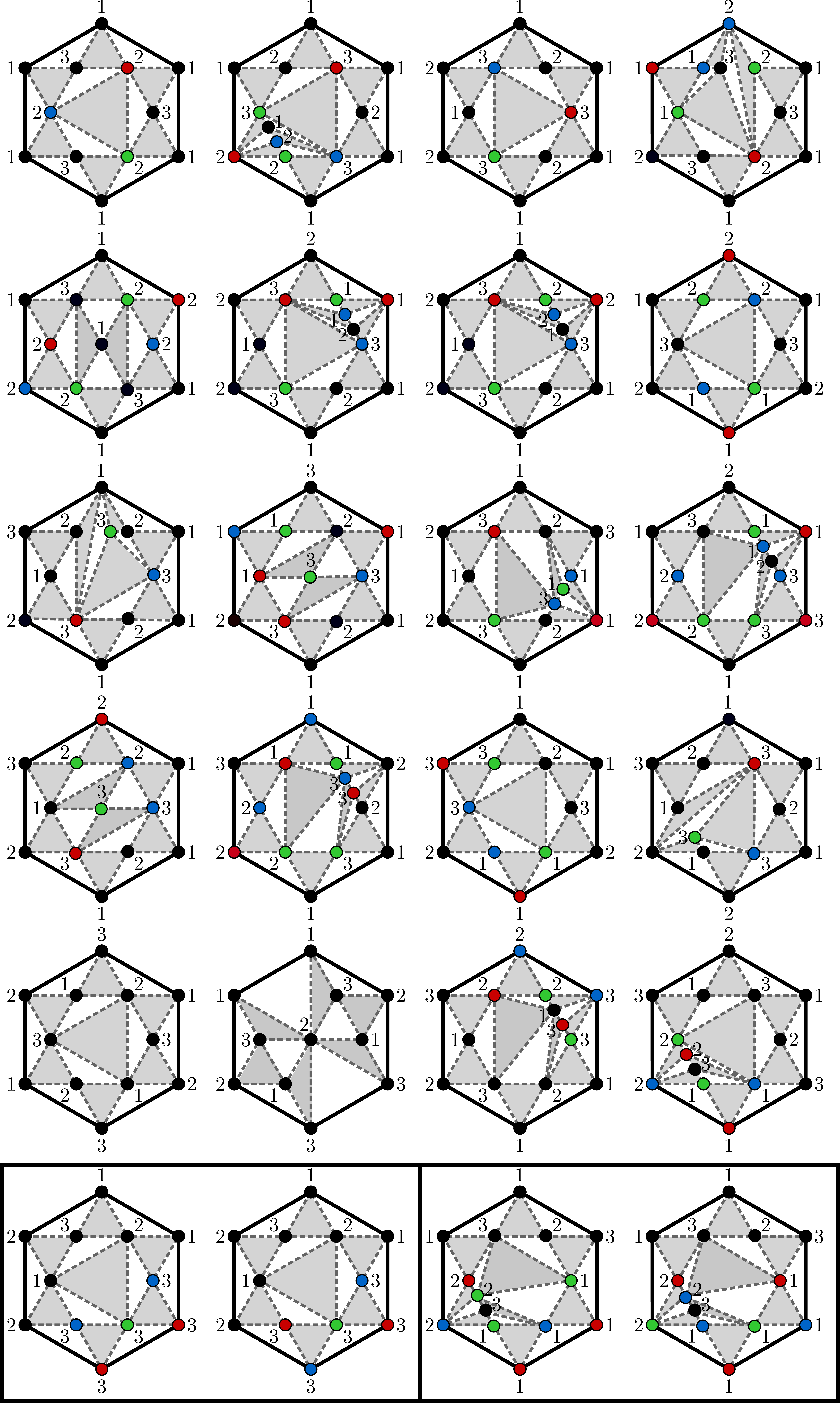} 
\caption{Labeled hexagons and their associated trades. Vertex labels match vertices to sets $V_i$.  Colors can be used to further partition vertices into sets $V_{i,j}$ where needed; two hexagon labelings are repeated (at bottom) to let us deal with any such partition.}
\label{22hex} 
\end{figure}

Take our $C_6$.  By permuting the labels $1,2,3$ if needed, find the appropriate diagram in Figure \ref{22hex} that shares the same labeling as our hexagon.  Replace the boundary of this diagram with our $C_6$, and then use the fact that each vertex in $C_6$ belongs to a set $V_{i,j}$ to refine the label of each such boundary vertex to a pair $(i,j)$.

We now subdivide the labels of our internal vertices into pairs $(i,j)$ so that no two adjacent vertices are labeled with the same ordered pair.  For any $i$, look at the subgraph of our diagram's vertices given by only considering vertices labeled $i$.  Take any connected component of this subgraph.  In the case that this connected component has just one internal vertex, any refinement of $v$'s label will satisfy our desired property.  In all other cases, our diagram has given a 3-coloring of the vertices in each such component; use each such coloring along with the labelings of our exterior vertices to further refine each label to a pair $(i,j)$.  Where possible, choose bijections that minimize the maximum number of times any pair $(i,j)$ is used in this diagram.  If we associate each $(i,j)$-labeled vertex in our diagram with the set $V_{i,j}$, note that each triangle shaded in gray is a subgraph of one of our tripartite graphs $T, T_1, T_2, T_3$.  Exchanging our hexagon and the gray triangles in this diagram for the white triangles is therefore a trade that accomplishes our desired shrinking effect.

If we started with a cycle $C_n$ with $n \geq 7$, our process is similar.  Take any path of length six that is a subgraph of our $C_n$.   Pretend that the ends of this $P_6$ are actually joined up to form a hexagon, and look up the appropriate diagram in Figure \ref{22hex}; now, remove the edge between $w_1$ and $w_6$ in the Figure \ref{22hex} trade.  This yields a trade that takes in our path $P_6$ and some number of triangles, and outputs a smaller path $P_{3}$ along with one more triangle than we started with; in other words, we have a trade that shrinks the length of our cycle $C_n$ by three, as desired.
\begin{figure}[H]
\includegraphics[width=0.6\textwidth]{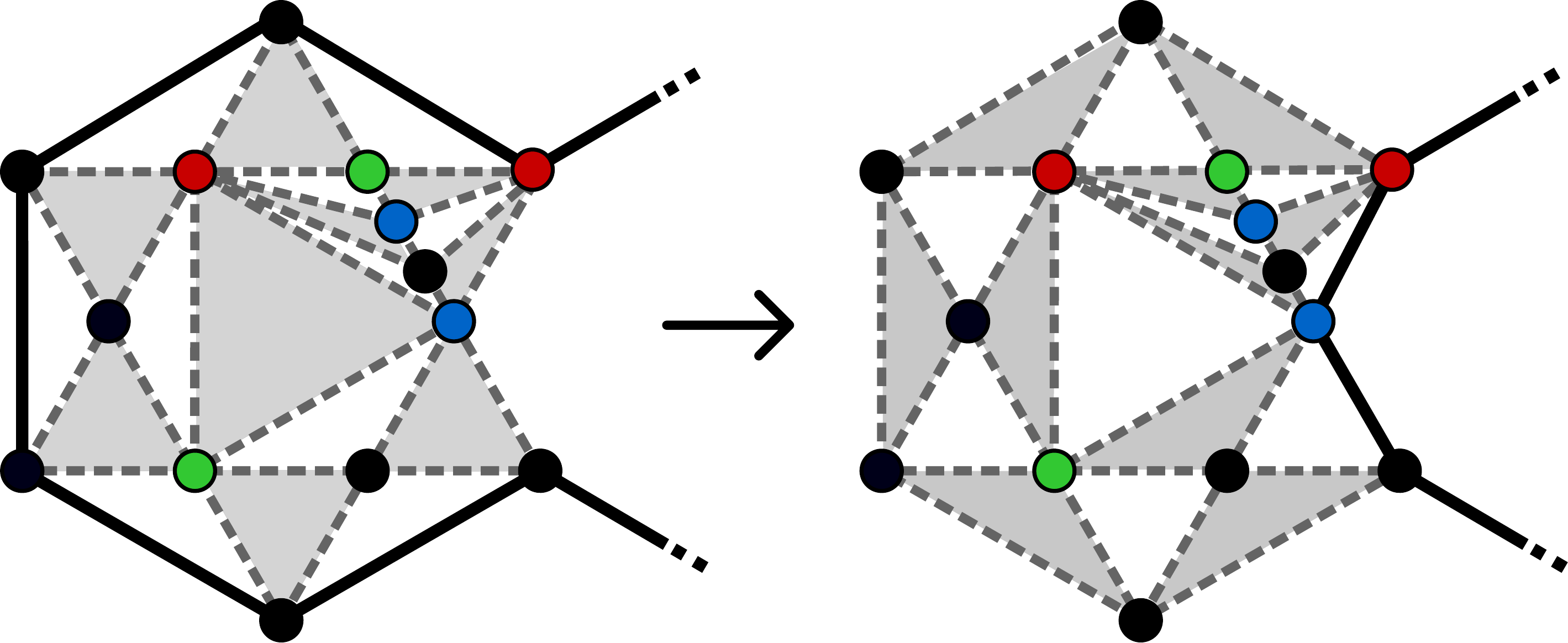} 
\caption{Using a hexagon trade to shrink a path by three edges.}
\end{figure}

As in Lemma \ref{landonlem}, create such a configuration by choosing internal vertices one-by-one with the desired edges.    Whenever we make such a choice, always choose the vertex with highest $H$-degree.

One-by-one, take each cycle in $\mathcal{C}$ whose length is a multiple of three, and run the process above on it until only triangles remain.  If the process here is ran more than once on a given cycle $C$, ensure that the $P_6$ chosen to start each subsequent run of this process contains the $P_3$ generated by the trade immediately prior, and that this $P_6$'s endpoints do not overlap with this $P_3$.  Note that doing this ensures that if $v$ was on the $P_3$ from an earlier step, it cannot occur in the $P_3$ in the next step.  Therefore, each instance of $v$ in a cycle in $L$ induces at most two occurrences of $v$ in the boundary of our trades thus far.

This leaves us with a set $\mathcal{C}$ in which all of our cycles have lengths that are not multiples of three.  Take any two cycles $C_a, C_b \in \mathcal{C}$; we describe a set of trades that merge these subgraphs into cycles whose lengths are all congruent to either 0 or $a+b$ mod 3.  

If $C_a, C_b$ share any vertices in common, choose one and call it $w$.  If $C_a$ has length longer than seven, repeatedly apply the shrinking trades from earlier to reduce it to a cycle whose length is either four or five.  Do so such that $w$ is never used in any of these shrinking trades, and so that the $P_3$ resulting from the last trade is adjacent to $w$, if such a vertex $w$ exists.  Do the same to $C_b$ if needed; as well, when doing this, ensure that the middle vertex in the $P_3$ in the last of these shrinking does not show up in any of the vertices in our now-shrunk $C_a$.  

Finally, suppose that $C_b$ already has length 4 or 5 but shares two or more vertices in common with $C_a$.  
\begin{figure}[H]
\includegraphics[width=\textwidth]{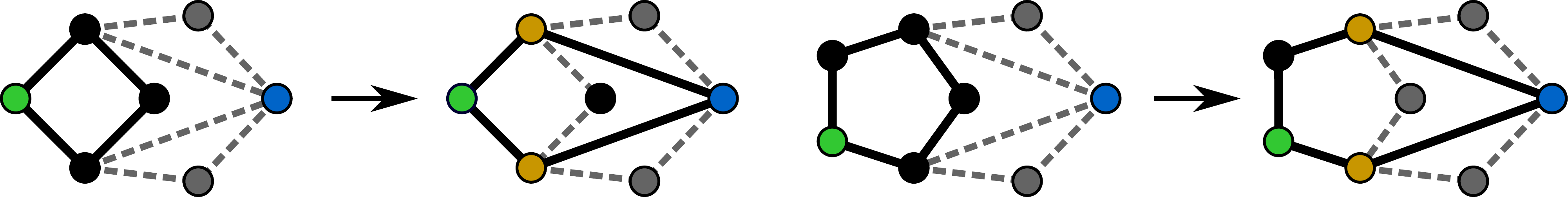} 
\caption{What to do with a $C_4$ or a $C_5$.  The vertex in green is the vertex $w$ that $C_a, C_b$ share in common, if it exists.}\label{mergec4c5}
\end{figure}
Choose three vertices one-by-one (colored gray and blue) that are part of two dashed-line triangles in $T$ as drawn in Figure \ref{mergec4c5}.  When doing this, ensure that the blue vertex is not in the cycle $C_a$, and always choose the vertex with highest $H$-degree possible when picking these vertices.  Perform the trades indicated below, and then use our shrinking lemma to reduce the $C_6$'s to triangles; this leaves us with a cycle with the same length as $C_b$.

If at the end of this process both $C_a, C_b$ are pentagons and they share at least two vertices in common, then casework shows that their union can be rewritten as either a $C_4 \cup C_6$, $C_3\cup C_7$, or $C_3 \cup C_3\cup C_4$, all of which are cycles whose lengths are congruent to 0 or $a+b$ mod 3.  Conversely, if we make sure to always shrink any cycle whose length is congruent to 1 mod 4 last, our two cycles $C_a, C_b$ will share at most one vertex in common if either $C_a, C_b$ had length congruent to 1 mod 4.  As a result, the only cases that remain are when we've shrunk $C_a, C_b$ to $C_4$'s and $C_5$'s that share at most a single vertex $w$ in common.

Using Figure \ref{mergefignew}, look up the appropriate figure, and use it to choose gray vertices one-by-one such that the dashed-line configurations drawn below are subgraphs of one of $T$. Again, maximize the $H$-degree of these chosen vertices where possible.  Performing the trades indicated leaves us with cycles whose lengths are all either 0 or $a+b$ mod 3, as desired. 

One-by-one, take any two cycles $C_a, C_b \in \mathcal{C}$ such that $a+b$ is congruent to 0 mod 3, perform the merging trades indicated above, and then use the shrinking trades from earlier to decompose the resulting cycles into triangles.  When doing this, shrink the resulting cycles so that none of the black or gold-colored vertices in this diagram are involved in more than one shrinking trade per cycle that they are in.  

With this done, repeatedly choose any triple $C_a, C_b, C_c$ for which $a,b,c$ are all congruent mod 3, merge $C_a$ and $C_b$, and shrink the resulting cycles so that the only cycle that remains has length less than six and is congruent to $a+b$ mod 3; call this cycle $C_d$.  When doing this, again ensure that none of the black or gold-colored vertices in this diagram are involved in more than one shrinking trade per cycle that they are in, and also that all of the vertices in $C_d$ are gray or blue.  Merge this $C_d$ with $C_c$, and shrink the resulting cycles as described earlier.   
\begin{figure}[H]
\includegraphics[width=\textwidth]{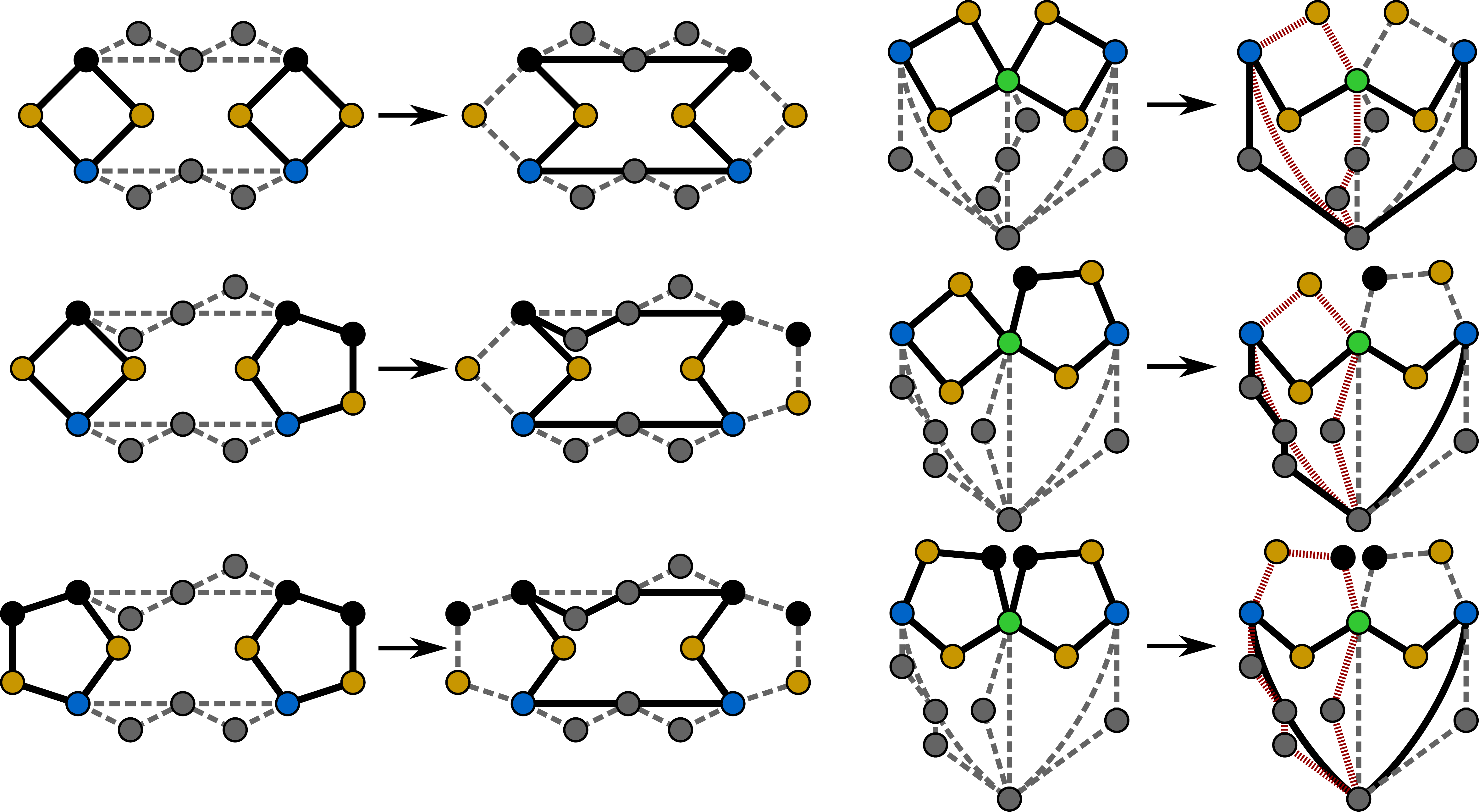} 
\caption{How to merge small cycles.   The green vertex is the vertex $w$ both cycles share in common, if it exists.  The gold vertices in the cycles being merged correspond to either the endpoints of the $P_3$ created by the last shrinking trade if we applied a shrinking trade to that vertex, or correspond to the gold vertices in Figure \ref{mergec4c5} if we applied one of those trades.  The blue vertex is the vertex we got to choose in either that shrinking trade or Figure \ref{mergec4c5} trade.}\label{mergefignew}
\end{figure}

At the end of this process, we have used a collection of trades to completely decompose $\mathcal{C}$ into triangles, in which each trade consisted of triangles from the tripartite subgraphs $T, T_i$.  We finish our proof by determining the conditions needed to ensure these trades all exist. When choosing a vertex $v$ for any of these trades, we eliminate choices for the following reasons:
\begin{itemize}
\item The vertex $v$ is already in use somewhere else in the configuration.  As no internal vertex is chosen from a $V_{i,j}$ containing more than two other vertices in any of the diagrams presented in this proof, this eliminate at most two choices.
\item The vertex $v$ is not adjacent to all of its neighbors.  If we let $\epsilon = \max(\epsilon_{T_1 \setminus R}, \epsilon_{T_2\setminus R}, \epsilon_{T_3\setminus R})$, this eliminates at most $l\epsilon n $ choices, where $l$ is the number of neighbors of $v$ we have already chosen in this diagram.
\end{itemize}

If we order and select vertices in descending order according to their degree, examining Figures \ref{22hex}, \ref{mergec4c5}, \ref{mergefignew} shows that $l$ is at most four.  Therefore, we can make these all of these choices provided that $n - 4\epsilon n - 2 \geq 1$.  

To determine $\epsilon$, we turn our attention to the set $R$.  There are two different ways in which a vertex occurs in the trades that make up $R$:
\begin{itemize}
\item As a vertex that was part of a cycle in $\mathcal{C}$, and as such that we did not choose. 
\item As a vertex that we've chosen; i.e. as one of the internal vertices in our shrinking trades, or as one of the gray or blue vertices in our merge trades.
\end{itemize}

Notice that the least-efficient cycles for our trades are disjoint $C_4$'s; to deal with the twelve edges in three disjoint $C_4$'s we need to perform two merge trades and nine shrink trades to convert these cycles into triangles.  Examining Figures \ref{22hex}, \ref{mergefignew} shows us that each shrink and merge trade chooses at most four vertices from any $V_i, V_{i,j}$; as well, each time we choose a vertex $w \in V_{i,j}$ in either a shrink or merge trade, we use at most three edges from $w$ to any $V_{k}$ or  $V_{i,l}$. 

We make at most $\frac{11}{12}|E(L)|$ trades in total, and always choose the vertex with highest degree amongst our at least $n - 4\epsilon n - 2$ choices.  The worst-case scenario for $\epsilon$, then, is if we somehow had to keep repeatedly making these choices amongst the same set of $n - 4\epsilon n - 2$ vertices in some $V_{i,j}$, all of which had the same degree in $T_i$ and $T$.  In this setting, all of these vertices would have degree at least $n -\frac{\xi_{T_i} n^2}{n-4\epsilon n - 2}$ in $T_i$, $3n - \frac{\xi_T 9n^2}{n-4\epsilon - 2}$ in $T$ and we would decrease these degrees by at most $\frac{\frac{11}{12}\cdot 4 \cdot 3 \cdot |E(L)|}{n-4\epsilon n - 2}$.  Accordingly, if $\epsilon < \frac{1}{9}$ and
\begin{align*}
2\xi_{T_i} + 22\frac{|E(L)|}{n} < \epsilon_{T_i}, 18\xi_{T} + 22|E(L)| < \epsilon_{T}
\end{align*}
these chosen vertices do not change the constants $\epsilon_{T_i}, \epsilon_T$.

As a result, the only vertices that affect the values $\epsilon_{T_i}, \epsilon_T$ are those that we do not choose; i.e. those that came from cycles in $\mathcal{C}$.  Each occurrence of a vertex $v$ in a cycle in $\mathcal{C}$ induces at most two occurrences of $v$ as an external vertex in a shrinking trade.  If $v \in V_{i,j}$, examining Figures \ref{22hex}, \ref{mergefignew} shows us that for any $k \neq i, l \neq j$ we use at most one edge from $v$ to any $V_{i,l}$ or  $V_k$ in any such shrinking trade, and at most one edge from $v$ to any $V_k$ in any merge trade.  As a result, each edge in $L$ requires us to expend one edge in $T_i$ and $\frac{3}{2}$ edges in $T$ to deal with it, and therefore tells us that
\begin{align*}
\epsilon_{T_i \setminus R} \leq \epsilon_{T_i} + \frac{\delta(L)}{n}, \epsilon_{T \setminus R} \leq  \epsilon_{T} + \frac{\delta(L)}{2n}.
\end{align*}

In particular, this tells us $\epsilon$, and therefore gives us the claimed condition under which these trades can be made.

Finally, note that we've made at most $\frac{3}{4}|E(L)|$ merge trades through this entire process, each of which used at most two triangles from any $T_i$ and seven from $T$, and at most$\frac{1}{6}|E(L)|$ merge trades, each of which used at most four triangles from any $T$.  As a result, we have
\begin{align*}
\xi_{T_i \setminus R} \leq \xi_{T_i} + \frac{9|E(L)|}{2n^2},  \xi_{T \setminus R} \leq \xi_{T} + \frac{71|E(L)|}{36n^2}.
\end{align*}
\end{proof}

Apply this lemma to the graph $G$ that we have been working with throughout this paper, and let $T^{(3)} = T^{(2)} \setminus R, T_i^{(3)} = T_i^{(2)} \setminus R$: we then have

\begin{alignat*}{4}
&\epsilon_{T^{(3)}} &&\leq \epsilon_{T^{(2)}} + \frac{\delta(L)}{2n} &&\leq \epsilon_{T'} +  \frac{(\epsilon_{G'} + 4\sqrt{3\xi_{G'}} + 27\xi_{G'})9n}{2n} && \leq \frac{15}2 \epsilon_{G'} +  18\sqrt{3\xi_{G'}} + \frac{243}{2}\xi_{G'},\\
&\epsilon_{T_i^{(3)}} &&\leq \epsilon_{T_i^{(2)}} + \frac{\delta(L)}{n}  &&\leq \epsilon_{T_i'} +  \frac{(\epsilon_{G'} + 4\sqrt{3\xi_{G'}}+ 27\xi_{G'})9n}{n} && \leq 18 \epsilon_{G'} +  36\sqrt{3\xi_{G'}} + 243\xi_{G'},\\
&\xi_{T^{(3)}} &&\leq \xi_{T^{(2)}} +  \frac{71|E(L)|}{36n^2} && \leq  7\xi_{T'} + \frac{71\left(8\xi_{G'} +  \sqrt{\frac{\xi_{G'}}{3}}\right)(9n)^2}{36n^2}  && < 1278\xi_{G'} + 93\sqrt{\xi_G},\\
&\xi_{T_i^{(3)}} &&\leq \xi_{T_i^{(2)}} +  \frac{9|E(L)|}{2n^2} && \leq  7\xi_{T'} + \frac{9\left(8\xi_{G'} +  \sqrt{\frac{\xi_{G'}}{3}}\right)(9n)^2}{2n^2}  && < 2916\xi_{G'} + 632\sqrt{\xi_G}.\\
\end{alignat*}

  These subgraphs $T, T_i$ are the last sets of edges that we desire to decompose into triangles.  If we can ensure that the complement of these four graphs admit triangulations, then our results on partial Latin squares will suffice to triangulate the rest of our graph.  The following lemma is designed to do just that:

\begin{lem}\label{tricomp}
Take any tripartite graph $T$ on $3n$ vertices.  If
\begin{align*}
n - 8\epsilon_T n > 3
\end{align*}
there is a subgraphs $R$ of $H$ with the following properties:
\begin{enumerate}
\item  $R$  is made entirely out of edge-disjoint triangles.
\item  $R \cup \overline{T}$ can be decomposed entirely into edge-disjoint triangles. 
\item $\epsilon_{T \setminus R} \leq 2\epsilon_T, $.
\item $\xi_{T \setminus R} \leq 8\xi_{T}$.
\end{enumerate}
\end{lem}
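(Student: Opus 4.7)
The plan is to construct $R$ by processing $\overline{T}$'s edges via local trades, accumulating the auxiliary $T$-triangles they use.  Crucially, in the context where this lemma is applied, $T$ is tridivisible, so its tripartite complement $\overline{T}$ (in $K_{n,n,n}$) is tridivisible as well: each vertex of $\overline{T}$ has even degree $2n - \deg_T(v)$, and $|E(\overline{T})| = 3n^2 - |E(T)| \equiv 0 \pmod{3}$.  Hence $\overline{T}$'s edges decompose into a collection $\mathcal{C}$ of edge-disjoint cycles (via a standard Eulerian argument on each connected component), which we process one by one in a manner parallel to Lemma \ref{pathlem}, except that all auxiliary triangles must now come from the single tripartite graph $T$ rather than from $T$ and the $T_i$'s.

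The core operation is a shrinking trade: given a $P_6$-sub-path of a cycle in $\mathcal{C}$, we select a small constant number of internal vertices so that the $P_6$ together with a collection of $T$-triangles can be exchanged for a $P_3$ plus more triangles, using an appropriate template from Figure \ref{22hex}.  When choosing each internal vertex we exclude non-neighbors (in $T$) of its already-fixed neighbors in the template, and vertices whose use would create an edge-collision in $R$ or in the replacement triangle collection; a careful tally yields at most $8\epsilon_T n$ forbidden candidates per choice, so the hypothesis $n - 8\epsilon_T n > 3$ guarantees at least three valid options at every step.  Greedily picking the candidate with highest current $T$-degree (analogously to Lemmas \ref{paddylem1}, \ref{landonlem}, and \ref{pathlem}) balances the load so that no single vertex is overused as an internal vertex.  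Short cycles of length $3, 4,$ or $5$ and the residual $P_3$'s produced by shrinking are absorbed via merge-and-shrink trades analogous to Figures \ref{mergec4c5} and \ref{mergefignew}, scheduled so that the combined lengths are compatible modulo $3$ with the shrinking step that follows.

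After all cycles are processed, $R$ consists of all the $T$-triangles introduced across the trades; being assembled as a disjoint union of such triangles, $R$ automatically admits the triangle decomposition demanded by property (1), while the families of replacement triangles produced by the trades form the triangulation of $R \cup \overline{T}$ required by property (2).  Bookkeeping along the lines of Lemma \ref{pathlem} bounds $|E(R)| \leq 7\xi_T n^2$, yielding $\xi_{T \setminus R} \leq 8\xi_T$; the greedy highest-degree selection keeps the per-vertex loss at $\leq \epsilon_T n$ neighbors in each direction, yielding $\epsilon_{T \setminus R} \leq 2\epsilon_T$.

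The main obstacle is verifying that the restricted trade palette (only $T$-triangles, not $T_i$-triangles) still suffices for every cycle and every residual configuration.  Several of the hexagon templates in Figure \ref{22hex} naturally use $T_i$-triangles internally; in our setting these must either be replaced by sequences of $T$-only trades employing extra internal vertices, or circumvented by rerouting the cycle decomposition to avoid the offending labelings.  Ensuring that these substitutions never push the exclusion count above $8\epsilon_T n$, so that the hypothesis $n - 8\epsilon_T n > 3$ still governs every selection, is the most delicate combinatorial step in the argument.
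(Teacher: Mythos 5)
There is a genuine gap, and you have in fact flagged it yourself in your last paragraph: you never resolve how to carry out the trades using only $T$-triangles. Your plan decomposes $\overline{T}$ into \emph{arbitrary} edge-disjoint cycles via an Eulerian argument, which forces you to confront all 22 hexagon labelings (many of whose templates consume $T_i$-triangles) plus the merge trades for cycles whose lengths are not multiples of $3$. Calling the necessary substitutions ``the most delicate combinatorial step'' without exhibiting them leaves the lemma unproved.

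The paper's proof sidesteps all of this with one observation you are missing. By the time this lemma is applied, Lemma \ref{landonlem} has arranged $\deg_+(v) = \deg_-(v)$ for every vertex of $T$, and since every vertex of $K_{n,n,n}$ has $\deg_+ = \deg_- = n$, the complement $\overline{T}$ inherits $\deg_+(v) = \deg_-(v)$ as well. Hence any positively-oriented path in $\overline{T}$ (edges going $V_i \to V_{i+1}$) can be extended until it closes up, so $\overline{T}$ decomposes into \emph{positively-oriented} cycles. Every such cycle automatically has length divisible by $3$ and carries the labeling $1\text{-}2\text{-}3\text{-}1\text{-}2\text{-}3\text{-}\cdots$. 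Consequently only the single ``1-2-3-1-2-3'' shrinking template from Figure \ref{22hex} is ever needed; that template uses only triangles transversal to $(V_1,V_2,V_3)$, i.e.\ triangles of $T$, and no merge trades are required at all since no cycle of length $\not\equiv 0 \pmod 3$ ever arises. With that in hand the counting is routine ($7$ triangles of $T$ per trade, at most one trade per three edges of $\overline{T}$, greedy highest-degree selection of internal vertices under the hypothesis $n - 8\epsilon_T n > 3$), which is exactly the bookkeeping you describe. Your estimates $|E(R)| \le 7\xi_T n^2$ and the $\epsilon$, $\xi$ bounds are fine once the cycle decomposition is chosen correctly; it is the choice of decomposition, not the arithmetic, that makes the restricted trade palette sufficient.
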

\begin{proof}
Because $T$ is tridivisible, the tripartie complement $\overline{T}$ of $T$  is tridivisible.  As a consequence, any positively-oriented path (i.e. one in which we go from $V_i$ to $V_{i+1}$) can be extended to one that self-intersects, and thereby generate a positively-oriented cycle; as a consequence, we can decompose $\overline{T}$ into a collection of positively-oriented cycles whose lengths are all multiples of three.  Do this.

Consequently, we can repeatedly use the``1-2-3-1-2-3'' shrink trades in Figure \ref{22hex} to reduce the graph $\overline{T}$ into triangles, using only seven triangles from $T$ per ``1-2-3-1-2-3'' trade.  Let $R$ denote the collections of triangles needed to create these trades; by the same logic as in Lemma \ref{pathlem}, this takes at most $\frac{\xi_T n}{3}$ trades, and results in a collection $R$ such that
\begin{align*}
\epsilon_{T \setminus R} \leq 2\epsilon_T ,\qquad  \xi_{T \setminus R} \leq 8\xi_{T} \\
\end{align*}
and can be accomplished as long as
\begin{align*}
n - 8\epsilon_T n > 1.
\end{align*}
\end{proof}

Apply this lemma to each of $T^{(3)}, T_i^{(3)}$ and delete the resulting sets of trades to get graphs $T^{(4)}, T_i^{(4)}.$  These graphs have
\begin{alignat*}{3}
&\epsilon_{T^{(4)}} &&\leq 15 \epsilon_{G'} +  36\sqrt{3\xi_{G'}} + 243\xi_{G'} &&< 15 \epsilon_G + 243\xi_G + 36\sqrt{3\xi_G} + \frac{13807}{|V(G)|} + \frac{452}{\sqrt{|V(G)|}} \\
&\epsilon_{T_i^{(4)}} && \leq 36 \epsilon_{G'} +  72\sqrt{3\xi_{G'}} + 486\xi_{G'} && < 36 \epsilon_{G} +  72\sqrt{3\xi_{G}} + 486\xi_{G}  + \frac{28035}{|V(G)|} + \frac{904}{\sqrt{|V(G)|}}\\
&\xi_{T^{(4)}} && < 10224\xi_{G'} + 744\sqrt{\xi_G'} &&< 10224\xi_{G'} + 744\sqrt{\xi_G'} + \frac{536760}{|V(G)|} + \frac{5390}{\sqrt{|V(G)|}}\\
&\xi_{T_i^{(4)}} &&< 23328\xi_{G'} + 5056\sqrt{\xi_G'}&&< 23328\xi_{G'} + 5056\sqrt{\xi_G'} + \frac{1224720}{|V(G)|} + \frac{265440}{\sqrt{|V(G)|}}\\
\end{alignat*}

Therefore, for any $\epsilon < \frac{1}{432}$, we can find some constant $\xi > 0$ such that any tridivisible graph $G$ with $\epsilon_G < \epsilon, \xi_G < \xi$ has $\epsilon_{T^{(4)}}, \epsilon_{T_i^{(4)}} < \frac{1}{12}$.  Because all graphs where $|V(G)| > \frac{1}{\sqrt{\xi_G}}$ are complete, Kirkman's result takes care of these cases, and leaves us with just those for which we can apply the crude bound  $|V(G)| \leq \frac{1}{\sqrt{\xi_G}}$.  In particular, this tells us that a sufficiently small choice of $\xi$ will yield a graph $G$ which after these trades will yield tripartite graphs that Theorem \ref{bartthm} applies to, thus giving us a triangulation of $G$.  In short, we have proven the main claim of this paper:

\begin{thm}
For any $\epsilon < \frac{1}{432}$, there is some $\xi > 0$ such that any tridivisible graph $G$ with $\epsilon_G < \epsilon, \xi_G < \xi$ admits a triangle decomposition.
\end{thm}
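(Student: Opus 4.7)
The plan is to assemble the lemmas of Section 2 into a single pipeline, track how the parameters $\epsilon$ and $\xi$ inflate through each stage, and then choose $\xi$ small enough that every hypothesis is met. The finish will be to convert the four cleaned tripartite subgraphs $T,T_1,T_2,T_3$ into partial Latin squares and invoke Theorem \ref{bartthm}.

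Concretely, I would first partition $V(G)$ as in Subsection \ref{notate}, with $|V(G)|=9n+r$, $n\equiv 1$ or $3\pmod 6$ and $r\leq 35$, and then execute in order:
\begin{enumerate}
\item the matching argument of Subsection \ref{remsection}, producing $G'$ with $\epsilon_{G'}\leq \epsilon_G + 70/|V(G)|$ and $\xi_{G'}\leq\xi_G+O(1/|V(G)|)$;
\item Lemma \ref{paddylem1} on each $G'_{i,j}$, leaving all but $O(\sqrt{\xi_{G'}})\,n^2$ edges from each such piece in triangles;
\item Lemma \ref{landonlem} on $T^{(2)}$ and each $T_i^{(2)}$ to equalize $\deg_+$ with $\deg_-$ and restore tridivisibility of the tripartite subgraphs;
\item the Leftover Lemma \ref{pathlem}, which absorbs every edge of $L^{(3)}$ into triangles at the cost of borrowing controlled numbers of edges from $T$ and the $T_i$;
\item Lemma \ref{tricomp} on each of the four remaining tripartite subgraphs, which replaces each by a tripartite graph whose complement is triangulable; and
\item Theorem \ref{bartthm}, applied to the four partial Latin squares corresponding to the complements of these tripartite graphs, to triangulate the final layer.
\end{enumerate}

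The main obstacle is checking that the hypotheses of each lemma in the chain remain satisfied after the earlier steps have inflated the constants. Composing the propagations already worked out in the paper yields
\begin{align*}
\epsilon_{T_i^{(4)}} &\leq 36\,\epsilon_G+72\sqrt{3\xi_G}+486\,\xi_G+o(1),\\
\xi_{T_i^{(4)}} &\leq 23328\,\xi_G+5056\sqrt{\xi_G}+o(1),
\end{align*}
with smaller analogous bounds for $T^{(4)}$. Since $36\cdot(1/432)=1/12$ is exactly the threshold required both by Lemma \ref{tricomp} (via $n-8\epsilon n>3$) and by the local hypothesis $\epsilon_1<1/12$ of Theorem \ref{bartthm}, any $\epsilon<1/432$ leaves strictly positive slack into which the $\sqrt{\xi_G}$, $\xi_G$ and $1/|V(G)|$ contributions fit once $\xi$ is made small enough. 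The global hypothesis $\epsilon_2<(1-12\epsilon_1)^2/10409$ of Theorem \ref{bartthm} is handled in the same way, since $\xi_{T_i^{(4)}}$ is dominated by a constant multiple of $\sqrt{\xi_G}$.

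Finally I would dispose of small cases by the observation that whenever $|V(G)|\leq \xi_G^{-1/2}$ the inequality $\xi_G|V(G)|^2\leq 1$ forces $G$ to be complete, so Kirkman's theorem already supplies the decomposition; for all larger $|V(G)|$ the $o(1)$ error terms listed above are absorbed into the chosen $\xi$, and the six-step pipeline terminates in a full triangle decomposition. Since each step was shown earlier to run in $O(n^4)$, the composite algorithm also runs in $O(n^4)$, yielding both the existence statement and the polynomial-time complexity claim.
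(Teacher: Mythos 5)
Your proposal follows essentially the same route as the paper: the identical six-stage pipeline (remainder matchings, Lemma \ref{paddylem1}, Lemma \ref{landonlem}, the Leftover Lemma, Lemma \ref{tricomp}, then Theorem \ref{bartthm}), the same constant-propagation culminating in $\epsilon_{T_i^{(4)}} \leq 36\epsilon_G + o(1)$ against the $1/12$ threshold, and the same disposal of small cases via Kirkman when $|V(G)| \leq \xi_G^{-1/2}$. This matches the paper's own argument, so no further comparison is needed.
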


\normalsize
\bibliographystyle{plain}	
\bibliography{myrefs}

\begin{thebibliography}{10}

\bibitem{barber2016edge}
Ben Barber, Daniela K{\"u}hn, Allan Lo, and Deryk Osthus.
\newblock Edge-decompositions of graphs with high minimum degree.
\newblock {\em Advances in Mathematics}, 288:337--385, 2016.

\bibitem{bartlett2013completions}
P.~Bartlett.
\newblock Completions of $\varepsilon$-dense partial latin squares.
\newblock {\em Journal of Combinatorial Designs}, 21(10):447--463, 2013.

\bibitem{Chetwynd_Haggkvist_1985}
A.~G. Chetwynd and R.~Haggkvist.
\newblock Completing partial $n \times n$ latin squares where each row, column
  and symbol is used at most $cn$ times.
\newblock {\em Reports, Dept. of Mathematics, University of Stockholm}, 1985.

\bibitem{Daykin_Haggkvist_1984}
D.~E. Daykin and R.~H\"aggkvist.
\newblock Completion of sparse partial latin squares, in: {G}raph {T}heory and
  {C}ombinatorics: {P}roceedings of the {C}ambridge {C}onference in {H}onor of
  {P}aul {E}rdos.
\newblock {\em Academic Press, London}, pages 127--132, 1984.

\bibitem{Gustavsson_1991}
T.~Gustavsson.
\newblock Decompositions of large graphs and digraphs with high minimum degree.
\newblock {\em Ph.D Thesis, University of Stockholm}, 1991.

\bibitem{Hall_1945}
M.~Hall.
\newblock An existence theorem for latin squares.
\newblock {\em Bull.\ Amer.\ Math.\ Soc.}, 51(6):387--388, 1945.
\newblock Part 1.

\bibitem{keevash2014existence}
P.~Keevash.
\newblock The existence of designs.
\newblock {\em arXiv preprint arXiv:1401.3665}, 2014.

\bibitem{keevash2015counting}
P.~Keevash.
\newblock Counting designs.
\newblock {\em arXiv preprint arXiv:1504.02909}, 2015.

\bibitem{Kirkman_1847}
T.~P. Kirkman.
\newblock On a problem in combinatorics.
\newblock {\em Cambridge Dublin Math. J.}, 2:191--204, 1847.

\bibitem{lindner2008design}
Charles~C Lindner and Christopher~A Rodger.
\newblock {\em Design theory}.
\newblock CRC press, 2008.

\bibitem{Nash_Williams_1970}
C.~St. J.~A. Nash-Williams.
\newblock An unsolved problem concerning decomposition of graphs into
  triangles.
\newblock {\em {C}ombinatorial {T}heory and its applications {III} ed.}, pages
  1179--1183, 1970.

\end{thebibliography}

\end{document}